\newcommand{\cal}{\mathcal}
\def\epsilon{\varepsilon}
\def\phi{\varphi}
\def\hat{\widehat}
\newcommand{\Out}{\mbox{\rm Out}}
\newcommand{\Aut}{\mbox{\rm Aut}}
\newcommand{\FN}{F_N}   
\newcommand{\R}{\mathbb R}
\newcommand{\N}{\mathbb N}
\def\strutdepth{\dp\strutbox}
\def \ss{\strut\vadjust{\kern-\strutdepth \sss}}
\def \sss{\vtop to \strutdepth{
\baselineskip\strutdepth\vss\llap{$\diamondsuit\;\;$}\null}}
\def\strutdepth{\dp\strutbox}
\def \sst{\strut\vadjust{\kern-\strutdepth \ssss}}
\def \ssss{\vtop to \strutdepth{
\baselineskip\strutdepth\vss\llap{$\spadesuit\;\;$}\null}}
\def\strutdepth{\dp\strutbox}
\def \ssh{\strut\vadjust{\kern-\strutdepth \sssh}}
\def \sssh{\vtop to \strutdepth{
\baselineskip\strutdepth\vss\llap{$\heartsuit\;\;$}\null}}
\def\qed{\hfill\rlap{$\sqcup$}$\sqcap$\par}
\def\bar{\overline}
\def\tilde{\widetilde}
\def\strutdepth{\dp\strutbox}
\def \ss{\strut\vadjust{\kern-\strutdepth \sss}}
\def \sss{\vtop to \strutdepth{
\baselineskip\strutdepth\vss\llap{$\diamondsuit\;\;$}\null}}
\def\strutdepth{\dp\strutbox}
\def \sst{\strut\vadjust{\kern-\strutdepth \ssss}}
\def \ssss{\vtop to \strutdepth{
\baselineskip\strutdepth\vss\llap{$\spadesuit\;\;$}\null}}
\def\qed{\hfill\rlap{$\sqcup$}$\sqcap$\par}
\def\¶{\partial}
\newtheorem{thm}{Theorem}[section]
\newtheorem{cor}[thm]{Corollary}
\newtheorem{lem}[thm]{Lemma}
\newtheorem{prop}[thm]{Proposition}
\theoremstyle{definition}
\newtheorem{defn}[thm]{Definition}
\newtheorem{example}[thm]{Example}
\newtheorem{rem}[thm]{Remark}
\newtheorem{defn-rem}[thm]{Definition-Remark}
\newtheorem{hypothesis}[thm]{Hypothesis}
\newtheorem{convention}[thm]{Convention}
\theoremstyle{remark}
\numberwithin{equation}{section}
\begin{document} 
 
\author[M.~Lustig]{Martin Lustig}
\address{\tt 
Aix Marseille Universit\'e, CNRS, Centrale Marseille, I2M UMR 7373,
13453  Marseille, France
}
\email{\tt Martin.Lustig@univ-amu.fr}

 
\title[Partial train tracks]{Train track maps for graphs of groups}
 
\begin{abstract} 
We define train track maps for graphs-of-groups $\cal G$ and exhibit the precise conditions under which the fundamental finiteness properties known 
for classical train track maps extend to this generalization. These finiteness properties are the crucial tool to control the decrease of illegal turns under iteration of the train track map, and they are a principal ingredient in the answer to basic algorithmic questions about automorphisms induced by such train track maps on $\pi_1 \cal G$.
\end{abstract}

\subjclass[2010]{Primary 	20F65, 20F34, Secondary 20E36, 20E06, 20E08, 57M10}
 
\keywords{train track, free group automorphism, illegal turn}
 
\maketitle

\section{Introduction}
\label{intro}

Train track theory has first been introduced by Thurston for surface homeomorphisms, and later carried over by Bestvina-Handel in \cite{BH} to free group automorphisms. It has turned out to be a central tool in the study of outer automorphisms of free groups $\FN$ with finite rank $N \geq 2$. 

\medskip

{\em A self-map $f: \Gamma \to \Gamma$ of a finite graph $\Gamma$ is a train track map if every edge $e$ of $\Gamma$ is ``legal'': for any $t\geq 1$ the edge path $f^t(e)$ is reduced.}

\medskip

A special role in train track theory play 
the so called 
``INP''s\footnote{\,\!\! The acronym INP was originally an abbreviation for ``irreducible Nielsen path'', but by now is mainly used as icon for certain more general phenomena.}:
We say that a path $\eta$ in $\Gamma$ is an {\em INP path} if it can be written as concatenation $\eta = \gamma \circ \gamma'$ of two legal paths $\gamma$ and $\gamma'$, and for some integer $t \geq 1$ the path $f^t(\eta)$ is homotopic relative endpoints to $\eta$. If the map $f$ is {\em expanding}, in that no edge is contracted or mapped periodically by $f$, 
then the above path $\gamma \circ \gamma'$ can not be a {\em legal concatenation}, as at the concatenation point of any image path $f^t(\eta) = f^t(\gamma) \circ f^t(\gamma')$ there will be a non-trivial subpath which cancels out when $f^t(\eta)$ is reduced. 
The illegal concatenation point of $\gamma \circ \gamma'$ is called the {\em tip} of the INP path $\eta$.

\smallskip

One of the fundamental properties which makes expanding train track maps such a valuable tool is the fact that for every path or loop $\gamma$ in $\Gamma$ there is an exponent 
$t(\gamma) \geq 0$ such that after reduction the image path $f^{t(\gamma)}(\gamma)$ becomes {\em pseudo-legal}\,: it is a legal concatenation of legal paths and 
INP paths. 
Furthermore the following holds, which is the first of three fundamental finiteness properties of expanding train track maps:

\medskip

{\em There are only finitely many INP paths in $\Gamma$.}

\medskip

The second fundamental finiteness property concerns the number of times, given by the above exponent $t(\gamma)$, for the iterations of $f$ needed in order to transform $\gamma$ into a path which is homotopic to a pseudo-legal path:

\medskip

{\em 
The exponent $t(\gamma)$ does not depend on the specific path $\gamma$ itself, but only on the number of ``illegal turns'' in $\gamma$: these are the concatenation points in the canonical decomposition of $\gamma = \gamma_1 \circ \gamma_2 \circ \ldots \circ \gamma_q$ as concatenation of maximal legal subpaths $\gamma_i$.
}

\medskip

The third fundamental finiteness property concerns the speed under which illegal turns in any path disappear, under iteration of $f$ and subsequent reduction. Of course, once the reduction $[f^t(\gamma)]$ has become pseudo-legal, the number of illegal turns will stay constant, since any illegal turn at the {\em tip} of an INP path will stay illegal forever. However, 
through counting properly (see Proposition \ref{from-LUbig} (3) for the precise statement), we obtain: 

\medskip

{\em 
For any path or loop $\gamma$ the number of illegal turns in $\gamma$ decreases exponentially under iteration of $f$, if one only counts those illegal turns which are not (or will not eventually become, under iteration of $f$ and subsequent reduction) the tip of an INP subpath of the reduced path $[f^{t(\gamma)}(\gamma)]$.
}

\medskip

The purpose of this paper is to show that the analogous facts are true for any expanding train track map $f: \cal G \to \cal G$ of a graph-of-groups $\cal G$ with trivial edge groups, 
and with vertex groups that are not necessarily free, a long as 
some basic conditions are assumed. 
The 
main task of our efforts 
in the subsequent sections 
is the 
detailed exhibition of these ``basic conditions''
(see Hypothesis \ref{running-hyp}), 
and to show how 
precisely 
they interfere in the proof of the finiteness properties. 
Our results allow applications 
which go quite a bit beyond free groups; for example, it is to be expected that they will be useful for 
the study of 
automorphisms of free products, 
which have recently seen anew 
a lot of activity and interest (see 
\cite{DahL-19, FrMa-13, GuiHo-17, GuiHo-19, HeKi-16, Ho-14a, Ho-14b, Sy-16}).

\smallskip

All of the arguments used in this paper are  
elementary, but at times a bit intricate; the main idea for the proof goes back to the proof of Lemma 3.2 
in 
\cite{Lu_conj-pr1_MPI}. The precise definition of the terms used in the following theorems are given in 
sections \ref{prelims} and \ref{sec-turns}; for the convenience of the reader we state here 
slightly streamlined versions 
of our results which do  
not quite reflect the full strength of 
what is 
proved in sections
\ref{sec:INP-paths}, \ref{sec:INP-candidates} and \ref{sec:algorithm}.

\begin{thm}
\label{main}
Let $\cal G$ be a 
graph-of-groups with trivial edge groups
and 
let $f: \cal G \to \cal G$ be an expanding train track map. 
Assume furthermore that 
$\pi_1\cal G$ is finitely generated, 
and that $f$ 
induces an automorphism on $\pi_1 \cal G$.  Then the following holds:
\begin{enumerate}
\item
There are only finitely many INP paths in $\cal G$.
\item
For every edge path or loop $\gamma$ in $\cal G$ there is an exponent 
$t(\gamma) \geq 0$ such that the reduced path $[f^{t(\gamma)}(\gamma)]$ derived from $f^{t(\gamma)}(\gamma)$ is a legal concatenation of legal and INP subpaths. 
The exponent $t(\gamma)$ depends only on the number of illegal turns in $\gamma$ and not on the particular choice of $\gamma$ itself. 
\item
There is an exponent $\hat t \geq 1$ such that for any 
edge path or loop 
$\gamma$ with legal iteration $[f^{t(\gamma)}(\gamma)]$ 
the number of illegal turns in 
$[f^{\hat t}(\gamma)]$ 
is at most half the number of illegal turns in $\gamma$.
\end{enumerate}
\end{thm}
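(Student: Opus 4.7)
The plan is to adapt the classical Bestvina--Handel strategy for train track maps on graphs to the graph-of-groups setting $\cal G$, using the running hypothesis to guarantee that the combinatorial objects (turns, germs, gates) controlled in the free group case remain finite. I would treat the three parts in order, since (2) relies on a structural understanding of what happens at illegal turns developed in (1), and (3) is a quantitative refinement of (2).

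For part (1), the idea is to bound both the number of possible tips and the combinatorial length of an INP. Since $\cal G$ has finitely many edges, there are only finitely many illegal turns, hence only finitely many candidates for the tip of an INP. Fix such a tip and let $\eta = \gamma \circ \gamma'$ be an INP with $f^t(\eta)$ homotopic rel endpoints to $\eta$. Because $f$ is expanding and $\gamma, \gamma'$ are legal, the edge-lengths of $f^t(\gamma)$ and $f^t(\gamma')$ grow like $\lambda^t$ in the Perron--Frobenius sense of the transition matrix of $f$. The cancellation between $f^t(\gamma)$ and $f^t(\gamma')$ at the tip is controlled by a bounded cancellation constant $C$ (whose existence in the graph-of-groups setting will be part of, or will follow from, Hypothesis \ref{running-hyp}). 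The equality $[f^t(\eta)] = \eta$ (up to homotopy rel endpoints) then forces an inequality of the form $\lambda^t(|\gamma|+|\gamma'|) - 2C \leq |\gamma|+|\gamma'|$, which bounds the length of $\eta$. Finiteness of INPs follows, noting that along the way one must verify that vertex-group contributions at interior vertices do not create infinitely many INPs with fixed combinatorial skeleton --- this is where the hypothesis on trivial edge groups and control at vertex groups is essential.

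For part (2), I would analyze the "germ dynamics" at each illegal turn. Write $\gamma = \gamma_1 \circ \cdots \circ \gamma_{q+1}$ as its canonical decomposition into maximal legal subpaths. At each illegal turn $\gamma_i \circ \gamma_{i+1}$, extract a "germ" consisting of a short legal neighborhood of the turn of length bounded by the INP length bound from (1). After applying $f$ and reducing, either cancellation stays within the germ --- giving a new germ --- or cancellation escapes and absorbs an entire neighboring legal segment, reducing the illegal turn count. By finiteness of germs (guaranteed by the hypothesis and the bound from (1)), the germ sequence at a persistent illegal turn is eventually periodic; passing to a power of $f$ one recognizes an INP subpath whose tip is that turn. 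The crucial observation is that the time $t(\gamma)$ needed for the germ dynamics to stabilize at every illegal turn depends only on the finite set of possible germ evolutions, hence only on $q$.

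For part (3), I would prove the halving estimate by partitioning the illegal turns of $\gamma$ into "eventual INP tips" and "transient" ones. By (2), a transient illegal turn is eliminated after at most some uniform number of iterations $t_0$, depending only on the germ data. Choosing $\hat t$ large enough so that during $\hat t$ iterations at least half of the transient turns are consumed --- using expansion to guarantee that at least one neighboring legal segment of each transient turn gets entirely swallowed by cancellation --- yields the bound. The cleanest approach is probably to show that in one step of $f$ followed by reduction, each transient turn has a definite probability (in the discrete sense of a uniform lower bound) of being eliminated, and then iterate.

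The main obstacle will be part (2), specifically ensuring that the "germ at an illegal turn" is a finite datum under the graph-of-groups dynamics. In the graph setting this is immediate, but at vertices with nontrivial (possibly infinite) vertex group, a turn is a pair of cosets and the $f$-image of a germ can drag in a vertex group element. The running hypothesis must guarantee that modulo the relevant vertex-group information the germ dynamics factors through a finite set; verifying this carefully, and tying it to the finiteness in (1), is the technical heart of the argument.
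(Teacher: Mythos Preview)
Your outline correctly identifies the overall architecture and, more importantly, correctly identifies the central obstacle: in the graph-of-groups setting a ``germ'' (or a path of bounded edge-length) carries vertex-group elements at each connecting path, and since vertex groups may be infinite, boundedness of $|\eta|$ does \emph{not} by itself yield finiteness of INPs or of germs. You flag this explicitly at the end, but you do not supply the mechanism that resolves it, and this is precisely the content of the paper.

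Concretely: your proof of (1) establishes a length bound $|\eta| \leq C_1$ via bounded cancellation (this is Lemma~\ref{bounded-length}), but then hand-waves the remaining step. The paper's actual argument here is substantial. The key observation (Lemma~\ref{special-applied}) is that the two legal branches of an INP are mapped by $f^t$ to the \emph{same} legal path, and since $f$ permutes essential vertex spaces injectively (Hypothesis~\ref{running-hyp}(1)), each turn on that common image must arise either inside a single $f^t(e)$ or at an \emph{inessential} vertex --- i.e.\ it is ``$t$-special'' in the sense of Definition~\ref{special-words}. There are only finitely many $t$-special turns, and by injectivity on vertex groups only finitely many preimage turns of each (Lemma~\ref{crucial-fin}(3)), so the branches $\gamma,\gamma'$ can use only finitely many turns. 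Combined with the length bound this gives a finite set $\mathcal V(f^t)$ of candidate edge paths (Lemma~\ref{first-finiteness}). Extracting the actual INP, whose endpoints may sit in edge interiors, requires a further careful iteration (Lemmas~\ref{vanishing}--\ref{in-detail} and Proposition~\ref{INP-finite}). None of this machinery is present in your proposal.

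The same gap propagates to (2): your germ dynamics is the right shape (it corresponds to the map $\hat f : \mathcal V_+ \to \mathcal V_+$ in the paper), but its finiteness is \emph{not} a consequence of ``the running hypothesis'' in any direct way --- it is again the special-turn argument, reapplied to pseudo-INP paths (Proposition~\ref{short-INP}). A secondary issue: your claim that ``there are only finitely many illegal turns'' is itself nontrivial here (a turn includes a vertex-group element) and needs Lemma~\ref{crucial-fin}(4), which again rests on injectivity of the vertex-group maps. For (3), your sketch is too vague to evaluate; the paper defers to prior literature once the finite germ set is in hand.
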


For a classical train track map $f: \Gamma \to \Gamma$ it is well known that the finite set of INP paths can be determined from the data that describe the map $f$. The analogue is true in the graph-of-groups setting, 
if there is enough algorithmic information available for the vertex groups. 
The precise conditions needed are explicited in Hypothesis \ref{given-data}; we state here only the case which we are most interested in:

\begin{thm}
\label{algo-det}
Let $\cal G$ and $f$ be as in Theorem \ref{main}, and assume that 
$\pi_1 \cal G \cong \FN$ is free. We also assume that  
for every vertex group $G_v$ of $\cal G$ the induced map on $\pi_1 G_v$ is given by a finite set of 
data.

Then there exist an algorithm which determines the following:
\begin{enumerate}
\item
The set of all INP paths in $\cal G$.
\item
For any edge path or loop $\gamma$ the exponent $t(\gamma)$ 
from part (2) of Theorem \ref{main}.
\item
The iteration exponent $\hat t$ from part (3) of Theorem \ref{main}.
\end{enumerate}
\end{thm}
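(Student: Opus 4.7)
The plan is to convert each existential statement in Theorem~\ref{main} into an algorithm by combining it with (a) the algorithmic data for $f$ and the vertex-group automorphisms supplied by the hypotheses, and (b) the fact that word-level computations in $\pi_1\cal G \cong \FN$ (reduction of edge paths, testing homotopy rel endpoints, comparing subpaths) are effective. I would treat part~(1) as the main step, since once the finite set of INP paths is in hand, parts~(2) and (3) follow by iteration coupled with the list produced in~(1).

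For part~(1), the illegal turns in $\cal G$ form a finite, explicitly computable set, extracted from the combinatorial data for $f$ together with the vertex-group data. Any INP with tip at an illegal turn $\tau$ has the form $\eta = \gamma \cdot \gamma'$ with $\gamma,\gamma'$ legal paths ending at $\tau$ and $[f^k(\eta)] \simeq \eta$ rel endpoints for some $k \geq 1$. The crucial preparatory step is to produce an effective upper bound $L$ on the combinatorial length of any INP: such a bound comes from combining the expansion of legal paths under $f$ with the bounded cancellation property, since an INP of length larger than $L$ would have its two legal halves expand under $f^k$ by more than the cancellation at the tip can absorb, contradicting $[f^k(\eta)] \simeq \eta$. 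With $L$ in hand, enumerate all candidate concatenations of length at most $L$ meeting at each illegal turn, and for each candidate $\eta$ compute $[f^k(\eta)]$ for $k = 1, 2, \ldots$ up to a uniform cut-off (itself bounded by the common exponent $T_1$ from Theorem~\ref{main}(2) for paths with one illegal turn), testing equality rel endpoints.

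For part~(2), once the INP list is in hand the pseudo-legal test is algorithmic: given a path $\delta$, one decomposes it into maximal legal subpaths and verifies that each non-legal concatenation point matches the tip of an INP subpath drawn from the list. To compute $t(\gamma)$, iterate $t = 1, 2, \ldots$, run the test on $[f^t(\gamma)]$, and return the first $t$ that succeeds; termination is guaranteed by Theorem~\ref{main}(2). For part~(3), the exponent $\hat t$ reflects a property of the dynamics of $f$ on the finite set of illegal-turn germs, once the subset destined to become INP-tips is identified via the list from~(1). A bounded search suffices: for $\hat t = 1, 2, \ldots$ one tests whether the promised halving of non-INP-tip illegal turns occurs on all finitely many germ-configurations, with termination guaranteed by Theorem~\ref{main}(3).

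The main obstacle is part~(1): producing an \emph{effective} length bound $L$ on INPs in the graph-of-groups setting. In the classical Bestvina--Handel framework one uses the Perron--Frobenius stretch factor combined with bounded cancellation, but here legal paths traversing vertex groups need not be stretched at a uniform rate by the edge transition matrix, so the bound must be assembled from the local expansion supplied by Hypothesis~\ref{running-hyp} together with the algorithmic vertex-group data. The freeness hypothesis on $\pi_1\cal G$ is genuinely used at this point: it supplies an effective bounded-cancellation constant for reduced edge paths, without which the termination cut-off in the candidate-testing loop of~(1) would not be computable.
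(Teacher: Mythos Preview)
Your proposal has a genuine gap in part~(1), and it is not the one you flag. You write that once an effective length bound $L$ is found, you will ``enumerate all candidate concatenations of length at most $L$ meeting at each illegal turn''. In the graph-of-spaces setting this enumeration is infinite: an edge path $e_1\circ\chi_1\circ e_2\circ\ldots\circ e_r$ of length $r\leq L$ still has infinitely many choices for each connecting path $\chi_i$, since $\chi_i$ ranges over the (in general infinite) vertex group $G_{v(i)}$. A length bound alone therefore does not give a finite search space. The same problem recurs in your part~(3): there are infinitely many illegal turns and infinitely many ``germ-configurations'', so a bounded search over them does not terminate.

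The paper's proof addresses exactly this point, and it is the technical core of the argument. The key observation is Lemma~\ref{special-applied}: if two legal paths with distinct first edges have identical $f^t$-images, then every turn in that common image is $t$-special, meaning it already occurs in the $f^t$-image of a single edge or of a length-$2$ path through an \emph{inessential} vertex space. Since inessential vertex spaces have only finitely many connecting paths, $t$-special turns form a finite computable set (Lemma~\ref{zero-finite}), and hence so do their preimage turns. The legal branches of any INP can use only these finitely many preimage turns, which together with the length bound yields the finite computable set $\cal V(f^{\hat t})$ (Lemma~\ref{first-finiteness}, Proposition~\ref{INP-finite}). The INP paths are then found as the $\hat f$-periodic elements of $\cal V_+$, and the exponents for parts~(2) and~(3) are read off from the cardinality and orbit structure of this finite set rather than from a search over paths. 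Your identification of the obstacle as the length bound is thus misplaced: Lemma~\ref{bounded-length} gives $C_1$ easily from bounded cancellation; what is hard, and what your outline omits, is cutting the infinite set of bounded-length paths down to something finite.
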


The proof of this theorem is given 
in section \ref{sec:algorithm}, where we use our careful analysis of the above named ``basic conditions'' to derive the necessary finiteness ingredients and their algorithmic 
determination: 
in section \ref{sec:algorithm} 
we describe a combinatorial procedure which consists of 5 steps that are explicitly carried through.

These results can be used for many algorithmic purposes, and in particular for the construction of a particularly well suited train track representative for any automorphism $\phi \in \Out(\FN)$. A short discussion is given in section \ref{sec:algorithm}, were we also give a sample application in Corollary \ref{fixed-conj}
(concerning fixed conjugacy classes).
This enables us in section \ref{sec:tts-with-periodics} to digress into a 
brief study 
of absolute train track maps - with and without periodic edges - and to deduce some 
unexpected facts about them 
(see 
Remark \ref{periodic-non-fixed} (2) and Example \ref{exotic-near-iwip}).

\medskip
\noindent
{\em Acknowledgements:}
The author would like to thank Ilya Kapovich for encouraging remarks which helped to bring the paper into a much more satisfying final shape. 
Further thanks go to Jean Pierre Mutanguha for an inspiring email conversation regarding some of the content in section \ref{sec:tts-with-periodics}. 

\section{Graphs-of-spaces, paths and edge paths, turns}
\label{prelims}

One of the basic problems when dealing with INP paths, known already from the classical setting where all vertex groups are trivial, 
comes from 
the fact that in general 
an endpoint of an INP path $\eta$ is not a vertex, but a point that lies in the interior of an edge. Thus a very careful treatment of the notion of paths and edge paths for the graphs in question is necessary; this is the prime reason why we work for 
most of this 
paper with graphs-of-spaces rather than graphs-of-groups. The transition between these two settings is canonical; nevertheless we will start this section by setting it up with all due care.

\smallskip

Let us denote by $\cal G_X$ be a graph-of-groups with trivial edge groups, built on a finite connected graph, 
with a vertex group $G_v$ for any vertex $v$ of the graph.
Let $\cal G$ be a {\em topological realization} of the graph-of-groups $\cal G_X$ as a {\em graph-of-spaces}, 
by which we mean that every vertex group $G_v$ of $\cal G_X$ is realized by a {\em vertex space} $X_v$ with $\pi_1 X_v = G_v$. We can thus think of $\cal G_X$ as obtained from 
$\cal G$ by 
contracting every connected component $X_v$ of the {\em relative part} $X = \cup X_v$ of $\cal G$ to a single vertex $v$, and in turn providing $v$ with 
the 
vertex group $G_v$. 
Similarly, the Bass-Serre tree $\tilde{\cal G}_X$ associated to $\cal G_X$ is obtained from the universal covering $\tilde{\cal G}$ of $\cal G$ 
by contracting every connected component $X_{\tilde v}$ of the full lift $\tilde X \subset \tilde{\cal G}$ of $X$ to a single vertex $\tilde v$.
This gives
$$\pi_1 \cal G = \pi_1 \cal G_X \, ,$$
where the term on the left hand side is an ordinary fundamental group, while on the right hand side we have the classical fundamental group of a graph-of-groups. The language for graph-of-groups used here is standard; it is 
conveniently 
summarized for instance in \S2.1 of \cite{LY}.

\smallskip

Even though 
in many occasions 
a vertex space $X_v$ may well 
be a cell complex and thus 
also contain edges, we will consider 
such cells 
only as 
``local cells'' (or
in particular 
as 
``local edges''); 
by an {\em edge of $\cal G$} 
(sometimes called a ``graph-of-spaces edge'') 
we always mean an edge in $\cal G \smallsetminus X$. Indeed, the concrete shape of the vertex spaces never plays a role, and 
in particular 
we will consider 
any path in $\cal G$ which traverses a vertex space $X_v$ only up to homotopy 
within $X_v$.

\medskip

An {\em edge path} in $\cal G$ is a path $ \gamma = e_1 \circ \chi_1 \circ e_2 \circ \chi_2 \circ \ldots \circ \chi_{r-1} \circ e_r$, where each $e_i$ is an edge in $\cal G \smallsetminus X$, while every {\em connecting path} $\chi_i$ is contained in 
a 
vertex space $X_{v(i)}$ which 
also 
contains the terminal endpoint of the edge $e_i$ and the initial endpoint of the edge $e_{i+1}$.
The 
above described 
canonical transition between $\cal G$ and $\cal G_X$ 
transforms $\gamma$ into a 
{\em connected} 
word $W_\gamma = e_1 \, \frak x_1 \, e_2 \, \frak x_2 \,  \ldots \, \frak x_{r-1} \,  e_r$ in the Bass group $\Pi(\cal G_X)$, i.e.  each $\frak x_i$ is an element of the vertex group $G_{v(i)}$, for $v(i) 
= \tau(e_i) = \tau(\bar e_{i+1})$. Here $\bar e$ denotes the edge $e$ with reversed orientation, and 
$\tau(e)$ is the terminal vertex of $e$ 
in the graph 
that underlies $\cal G_X$. The {\em length} $|\gamma|$ of an edge path $\gamma$ is equal to the number of edges (from $\cal G \smallsetminus X$) traversed by $\gamma$,
so that one has $|\gamma| = |W_\gamma|$ for the usual combinatorial length $|W|$ of any word $W \in \Pi(\cal G_X)$.

To be unequivocal, we'd like to state explicitly the following convention. 
As a consequence of it, two edge paths $\gamma$ and $\gamma'$ in $\cal G$ are {\em equal} if and only if any lifts 
$\tilde W_\gamma$ and $\tilde W_{\gamma'}$ 
of them to the Bass-Serre tree $\tilde{\cal G}_X$ agree, once they have the same first edge. 

\begin{convention}
\label{edge-path}
In this paper we use the convention that 
any 
edge path 
$\gamma$ in $\cal G$ starts and finishes with an edge from $\cal G \smallsetminus X$. In particular, its length always satisfies $|\gamma| \geq 1$.
\end{convention}

A {path} $\gamma'$ in $\cal G$ is 
said to be 
{\em zero} if it is contained in any vertex space $X_v$ of $\cal G$. A {\em non-zero path} 
$\gamma'$ 
is 
not necessarily an edge path, but we postulate that such $\gamma'$ is always part of an edge path $\gamma$ as specified above, where some (possibly trivial) initial segment from the first and terminal segment from the last edge of $\gamma$ is missing in $\gamma'$. 
These missing edge segments are never equal to the whole edge, so that 
any non-zero path $\gamma'$ always 
starts and ends in a 
non-trivial 
edge segment 
(which could be the whole edge), 
and not in a connecting path $\chi_i$.
The edge path $\gamma$ is called 
the {\em canonical vertex-prolongation} of the path $\gamma'$. 
It is well defined by 
the path 
$\gamma'$, 
since $\gamma'$ is  assumed to be non-zero.

\begin{convention}
\label{path}
By a {\em path} $\gamma'$ in $\cal G$ we always mean a zero path or a non-zero path as defined above. 
The restrictions and their consequences which are implied by this convention are discussed in Remark \ref{pseudo-paths} below.
\end{convention}

For a zero path $\gamma'$ 
we define 
the {\em length} $|\gamma'|$ of $\gamma'$ to be equal to $0$; if $\gamma'$ is non-zero, we use the canonical vertex-prolongation 
$\gamma$ of $\gamma'$ 
to set $|\gamma'| = |\gamma|$.

With these conventions, a path could also be a single point in the interior of an edge; in this case its length would be 1 (and not 0). 
Of course, every edge path in $\cal G$ is in particular a path, and any path $\gamma$ is an edge path if and only if it coincides with its canonical vertex-prolongation.

\begin{rem}
\label{pseudo-paths}
(1)
The specification in Convention \ref{path} of our usage of the word ``path'' only for zero or non-zero paths has the consequence, that a concatenation $\gamma \circ \chi$ of a non-zero path $\gamma$ with a zero path $\chi$ is not a ``path'' in the terminology set up for this paper. Indeed, such {\em path concatenations} occur only very sparsely in this paper, and any such occurrence will be pointed out specifically by our terminology. 

\smallskip
\noindent
(2)
On the other hand, our insistence on the particular conditions 
spelled out above 
for what we except as a ``path'', will turn out to be vital for the precise arguments put forward in this paper (compare Remark \ref{purity}).

\smallskip
\noindent
(3)
Loops which are not contained in one of the vertex spaces do occur 
occasionally (but not often) 
in this paper. We would like to point out that, according to our conventions, such a loop 
$\gamma$
can in general not be represented by an edge path, but rather by a path concatenation $\gamma_0$ as in (1) above. Specifically, in this case one has a concatenation
\begin{equation}
\label{cyclic-concat}
\gamma_0 = e_1 \circ \chi_1 \circ e_ 2 \circ \chi_2 \ldots \chi_{q-1} \circ e_q \circ \chi_q
\end{equation}
of an edge path 
$e_1 \circ \chi_1 \circ e_ 2 \circ \chi_2 \ldots \chi_{q-1} \circ e_q$ with some zero path $\chi_q$ 
that connects the terminal endpoint of $e_q$ to the initial endpoint of $e_1$. 
As usual, two such 
closed path 
concatenations $\gamma_0$ and $\gamma'_0$ are considered to represent the same loop $\gamma$ in $\cal G$ if 
and only if $\gamma_0$ and $\gamma'_0$ are identical 
up to a cyclic permutation of the indices in (\ref{cyclic-concat})
(and, as usual, modulo homotopy in the vertex spaces).
Thus, if there is no danger of confusion, we will 
in the sequel 
not always distinguish notationally between 
the loop $\gamma$ and and the closed path concatenation $\gamma_0$ 
as above.
\end{rem}

We  define a {\em turn} in $\cal G_X$ to be a word $e \,\frak x \,  e' \in \Pi(\cal G_X)$ with $\frak x \in G_{\tau(e)} = G_{\tau(\bar e')}$. 
Equivalently, a turn 
in $\cal G$ 
is 
an edge path $\gamma = e \circ \chi\circ e'$ of length 2 in $\cal G$. This definition is a slight variation of classical train track terminology, where one has always $\frak x = 1$ and hence prefers to present the path $e \, e'$ as pair $(\bar e, e')$.
We say that an edge path $\gamma_0$ {\em uses} a turn $e \circ \chi\circ e'$ if $\gamma_0$ contains the turn as subpath.
Since we will need to consider frequently paths that are not edge paths, it is useful to extend the notion of a ``turn'' 
and of ``using a turn'' 
also to paths $e \circ \chi\circ e'$ where $e$ and $e'$ are non-trivial edge segments.

A turn $e \,\frak x \,  e'$ is {\em degenerate} if $\frak x = 1 \in G_{\tau(e)}$ and $\bar e = e'$. Equivalently, for the corresponding edge path $e \circ \chi \circ e'$  one has $\bar e = e'$ and $\chi$ is a contractible loop in $X_{\tau(e)}$.
An edge path $\gamma$ in $\cal G$ is {\em reduced} if any subpath of length 2 defines a non-degenerate turn. A path $\gamma'$ in $\cal G$ is {\em reduced} if the canonical vertex-prolongation $\gamma$ of $\gamma'$ is reduced, 
or if 
$\gamma'$ is zero.
This includes the special case where $\gamma'$ has both endpoints 
$x$ and $y$ on the same edge $e$ of $\cal G$ and $\gamma'$ is equal to the segment 
$[x, y]$ 
of $e$. 

Every non-reduced 
path $\gamma$ can be transformed by iterative reductions (i.e. cancellation of degenerate turns) into a reduced path $[\gamma]$ with same endpoints as $\gamma$. 
If such a reduction takes place at the beginning or the end of an edge path, the ``left-over'' connecting path at the beginning or end, after the reduction, is also cancelled, so that the 
path $[\gamma]$ 
which results from reducing any edge path $\gamma$ 
is either zero, or else it starts and ends 
in an edge from $\cal G \smallsetminus X$ 
and is thus again an edge path.

The order of the reductions is irrelevant: the reduced path $[\gamma]$ is uniquely determined by $\gamma$, and the two paths are homotopic in $\cal G$ relative endpoints 
(or rather, relative to homotopy of the endpoints in $X$). 
If $\gamma$ is a path but not an edge path, then we have to 
admit in the reduction process also the cancellation of 
degenerate turns $e \circ \chi \circ e'$ where $\bar e = e'$ is an edge segment.
A path $\gamma$ which can be reduced to a zero path 
or to a single point in the interior of an edge 
is called a {\em backtracking path}.
We note that if $\gamma_0$ is any maximal backtracking subpath of 
any path $\gamma$, 
then $\gamma_0$ 
is 
necessarily 
an edge path, 
except possibly if $\gamma_0$ is an initial or terminal subpath of $\gamma$.

\begin{convention}
\label{ignore}
Since the transition between $\cal G$ and $\cal G_X$ is completely canonical, 
we will from now on allow ourselves to pass from one to the other without always explicitly notifying the reader. We will freely use both languages, according to whichever is better suited to the circumstances. In order to help avoiding a potential confusion, we will make the effort to use the term ``vertex'' only in the $G_X$ environment, while for example paths in $\cal G$ will terminate in ``endpoints'' (which may or may not be contained in 
a vertex space).
\end{convention}

\section{Graph-of-spaces morphisms, cancellation bound, image turns, special paths}
\label{sec-turns}

Any map $f: \cal G \to \cal G$ in this paper is always assumed to be a {\em graph-of-spaces morphism}, i.e. it maps
vertex spaces to vertex spaces, and edges to edge paths. 
The map $f$ induces on the associated graph-of-groups $\cal G_X$ a map $f_X: \cal G_X \to \cal G_X$.
In accordance with Convention \ref{ignore} we 
sometimes denote the map $f_X$ 
also by $f$, so that for example $f(v)$ denotes the image of 
any vertex $v$ 
in 
the graph that underlies $\cal G_X$. 
The map $f$ is {\em expanding} if some power $f^t$ maps every edge $e$ to an edge path of length $|f^t(e)| \geq 2$. 
Notice however that, even if $f$ is not expanding, then according to our Convention \ref{edge-path} the edge path $f(e)$ is 
a non-zero path and hence always has length $|f(e)| \geq 1$.

We 
are most interested in 
the case where 
$f$ induces an (outer) automorphism of $\pi_1 \cal G$, but in order to include also more general situations, we will now list the two weaker assumptions which are sufficient to deduce the results of this paper:

\begin{hypothesis}
\label{running-hyp}
Let $f: \cal G \to \cal G$ be 
a graph-of-spaces morphism. 
We consider the following two 
conditions:
\begin{enumerate}
\item
The 
map $f_*$ induced by $f$ on $\pi_1 \cal G$ is injective. In particular, 
for any vertex space $X_v$ of $\cal G$ the map on $\pi_1 X_v$ induced by $f$ is injective.
Furthermore, 
$f$ permutes the {\em essential} vertex spaces of $\cal G$, 
by which we mean the vertex spaces $X_v$ with non-trivial $\pi_1 X_v$.
\item
The map $f$ possesses 
a {\em cancellation bound} 
$C \geq 0$: 
the length of any backtracking subpath 
in the $f$-image of any reduced path 
is bounded above by $C$. 
\end{enumerate}
\end{hypothesis}

\begin{lem}
\label{C-bound}
Let $f: \cal G \to \cal G$ be 
graph-of-spaces morphism 
which induces an automorphism 
$f_*$ on 
$\pi_1 \cal G$.
Let us also assume 
that $\pi_1 \cal G$ is 
finitely generated. 
Then both conditions (1) and (2) of Hypothesis \ref{running-hyp} are satisfied.
\end{lem}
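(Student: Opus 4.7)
The plan is to verify the two clauses of Hypothesis~\ref{running-hyp} in turn.

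For clause (1), injectivity of $f_*$ on $\pi_1 \cal G$ is immediate from the automorphism hypothesis. Because edge groups are trivial, the Bass-Serre normal form embeds each vertex group $G_v = \pi_1 X_v$ as a subgroup of $\pi_1 \cal G$; the map induced by $f|_{X_v}$ on $\pi_1 X_v$ agrees, up to conjugation by a connecting path to a chosen basepoint of $\cal G$, with the restriction of $f_*$ to that subgroup, and is therefore injective as well. For the permutation of essential vertex spaces I would work on the Bass-Serre tree $T = \tilde{\cal G}_X$: the morphism $f$ lifts to an $f_*$-equivariant tree morphism $\tilde f_X: T \to T$ sending vertices to vertices and edges to edge paths. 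Since $f_*$ is surjective, the image $\tilde f_X(T)$ is a $\pi_1 \cal G$-invariant subtree of $T$; by minimality of the $\pi_1 \cal G$-action on $T$ (after, if necessary, passing to the essential core of $\cal G$) this forces $\tilde f_X(T) = T$. Combining this with the inclusions $f_*(G_v) \subseteq g_v G_{f(v)} g_v^{-1}$ and the fact that distinct essential vertex orbits of $T$ have non-conjugate stabilizers in $\pi_1 \cal G$, a rank count on the direct sum decomposition $H_1(\cal G; \Q) \cong \bigoplus_v H_1(X_v; \Q) \oplus H_1(\text{underlying graph}; \Q)$ shows that the induced vertex map must restrict to a bijection on those essential vertices $v$ with $H_1(X_v; \Q) \neq 0$; essential vertices with trivial abelianization are then handled by a parallel argument on the Grushko decomposition of $\pi_1 \cal G$, whose non-cyclic freely indecomposable factors are permuted up to conjugation by $f_*$.

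For clause (2), the existence of a cancellation bound is a graph-of-spaces adaptation of Cooper's Bounded Cancellation Lemma. Set $C_0 := \max_e |f(e)|$. For a single turn $e \circ \chi \circ e'$ (with $\chi \in G_v$) appearing in a reduced edge path, the cancellation at the junction of $f(e) \circ f(\chi) \circ f(e')$ is bounded by $\min(|f(e)|, |f(e')|) \leq C_0$, uniformly in $\chi$, since the bound does not involve $\chi$. To control the cascading of cancellations across several junctions (which can occur when some $f(e_i)$ is completely eaten from both sides), I would pass to the Bass-Serre tree $T$ and use that the $f_*$-equivariant map $\tilde f_X: T \to T$ has bounded displacement (by cocompactness of the $\pi_1 \cal G$-action on $T$) and is a quasi-isometry (since $f_*$ is an isomorphism); Cooper's tree argument then shows that $\tilde f_X$ carries geodesics within a uniform Hausdorff distance of geodesics, which translates to a global cancellation bound $C$ for $f$ on $\cal G$.

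The main obstacle is the permutation of essential vertex spaces in clause (1): while the homology/Grushko combination handles the generic situation, vertex groups with trivial abelianization require separate treatment via the Kurosh subgroup theorem applied to the Grushko factors of $\pi_1 \cal G$ and a careful analysis of how $f$ moves them. Clause (2) is technically routine once the Bass-Serre tree is in play, but one should carefully verify that the local vertex-space structure does not amplify cancellations beyond the bound predicted by the tree argument.
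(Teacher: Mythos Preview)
Your overall strategy matches the paper's: both parts reduce to the Bass--Serre tree, and clause~(2) is obtained by showing the lift is a quasi-isometry and then invoking the Morse lemma for $0$-hyperbolic spaces. There are two points worth flagging.

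For clause~(1), your homology detour is unnecessary and leaves you patching up the perfect-vertex-group case separately. The paper argues uniformly: Grushko gives each $G_v$ a decomposition $G_v = G_v^* * F_{N(v)}$ with $G_v^*$ freely indecomposable (no free factor); Kurosh then forces $f_*$ to send each $G_v^*$ into a conjugate of some $G_{f(v)}^*$, and since $f_*$ is an automorphism this must be a bijection on the non-trivial $G_v^*$'s. Quotienting these out handles the free parts $F_{N(v)}$ by the same mechanism. This is cleaner than splitting into ``non-trivial $H_1$'' versus ``trivial $H_1$'' cases.

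For clause~(2), your phrase ``$\tilde f_X$ has bounded displacement (by cocompactness)'' is wrong as stated: $\sup_x d(x,\tilde f_X(x))$ is typically infinite, since $\tilde f_X$ is $f_*$-twisted rather than equivariant for the identity. What you actually need is that $\tilde f_X$ is a quasi-isometry, and this does follow from Milnor--\v{S}varc (cocompact action plus $\pi_1\cal G$ finitely generated plus $f_*$ an automorphism), so your argument can be repaired. The paper avoids this appeal by a concrete trick: it first isotopes $f$ to a map $f'$ that \emph{permutes all vertices} of $\cal G_X$ (moving inessential vertices along paths of length at most the diameter $D$), so that the lift $\tilde f'_X$ is literally a bijection on the vertex set of the tree and hence manifestly a quasi-isometry; the bound for $f$ then differs from that for $f'$ by an additive $3D$. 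This gives an explicit cancellation constant rather than an existential one.
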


\begin{proof}
(1)
Since $\pi_1 \cal G$ is assumed to be finitely generated, 
then by Grushko's theorem the same is true 
for each of the vertex group $\cal G_v$, and there is an (up to conjugation) canonical f.g. subgroup $\cal G_v^*$ which 
doesn't split into a free product with a non-trivial free group as factor. 
Similarly, there is 
a complementary 
free group $F_{N(v)}$ 
of finite rank $N(v)$ 
such that $\cal G_v = G_v^* * F_{N(v)}$. Here of course $\cal G_v^*$ or $F_{N(v)}$ may well be trivial.

Since $f$ is a graph-of-spaces morphism, 
the induced map $f_*$ 
maps each of the vertex groups $\cal G_v$ 
to a conjugate of 
some $\cal G_{f(v)}$. It follows from 
Kurosh's subgroup theorem 
that $f_*$ 
maps each of the groups $\cal G_v^*$ to a conjugate of 
$\cal G^*_{f(v)}$. 
Since $f_*$ is assumed to be an automorphism, this map $\cal G_v^* \to \cal G^*_{f(v)}$ must be an automorphism, and $f$ must permute those vertices $v$ with non-trivial $\cal G_v^*$.

Through quotienting out the $\cal G_v^*$ 
we can deduce now 
the same conclusion 
for the factors $F_{N(v)}$. It follows that $f$ must permute the essential vertex spaces of $\cal G$.

\smallskip
\noindent
(2)
The given train track map $f$ thus permutes the essential vertices 
of the associated graph-of-groups $G_X$. We now perturb $f$ by an isotopy which only involves the non-essential vertices and their adjacent edges, to obtain a map $f': \cal G \to \cal G$ which 
acts as permutation on {\em all} of the vertices of $\cal G_X$. 
The resulting map $f'$ 
is still a graph-of-space morphism, and since it is isotopic to $f$, the map induced by $f'$ on $\pi_1 \cal G$ is unchanged and hence still an automorphism. The above isotopy can be achieved by moving every vertex along an edge path of length at most the diameter $D \geq 0$ of the finite graph that underlies $\cal G$.

We now consider the Bass-Serre tree $\tilde{\cal G}_X$ of the graph-of-groups $\cal G_X$ associated to $\cal G$, and note that 
both, $f$ and  $f'$, 
lift to maps $\tilde f_X : \tilde{\cal G}_X \to \tilde{\cal G}_X$ and $\tilde f'_X : \tilde{\cal G}_X \to \tilde{\cal G}_X$  respectively. 
From the assumption that $f_* = f'_*: \pi_1\cal G \to \pi_1 \cal G$ is an automorphism we know that the permutation by $f'$ on the vertices of $\cal G_X$ from the previous paragraph implies that $\tilde f'_X$ restricts 
on the set of vertices of $\tilde{\cal G}_X$ to a bijection. 

Hence, if we provide every edge of $\tilde{\cal G}_X$ with length 1, the map $\tilde f'$ becomes a quasi-isometry of $\tilde{\cal G}_X$. In particular, every geodesic $\gamma$ in $\tilde{\cal G}_X$ is mapped to a quasi-geodesic $\tilde f'_X(\gamma)$ in $\tilde{\cal G}_X$. Since $\tilde{\cal G}_X$ is a tree, and thus a 0-hyperbolic metric space, it follows that the quasi-geodesic $\tilde f'_X(\gamma)$ must travel within a bounded neighborhood of the unique geodesic $\gamma'$ which has the same endpoints as $\tilde f'_X(\gamma)$. Hence every vertex on $\tilde f'_X(\gamma)$ is of distance at most $C'$ from $\gamma'$, for some constant $C' \geq 0$ independent of the choice of $\gamma$. 

Considering now again the map $f$ and its lift $\tilde f_X$, we deduce that every vertex on $\tilde f_X(\gamma)$ is of distance at most 
$C' + D + 2D$ from $\gamma'$ (where the last term in the sum is due to the fact that $\tilde f_X(\gamma)$ and $\tilde f'_X(\gamma)$ may not have the same endpoints).
But then $C = C' + 3D$ is precisely a cancellation bound as required in Hypothesis \ref{running-hyp} (2).
\end{proof}

\medskip

Any 
graph-of-spaces morphism $f: \cal G \to \cal G$ defines an induced map on the set of turns in $\cal G$ as follows 
(where we keep in mind that $|f(e)| \geq 1$ for any edge $e$): 

Let $e_1 \circ \chi \circ e_2$ be any turn on $\cal G$. Then the turn $e'_1 \circ \chi' \circ e'_2$ is the {\em image turn} of the turn $e_1 \circ \chi \circ e_2$ if $e'_1$ is the last edge of $f(e_1)$, if $e'_2$ is the first edge of $f(e_2)$, and if $f(\chi)$ is homotopic relative endpoints to $\chi'$. Correspondingly, the turn $e_1 \circ \chi \circ e_2$ is a {\em preimage turn} of the turn $e'_1 \circ \chi' \circ e'_2$, under the map $f$.

We verify directly from this definition:

\begin{rem}
\label{forgotten}
Let $\gamma$ be any edge path on $\cal G$, and let $\gamma' = [f(\gamma)]$ be the reduced image path. 

\smallskip
\noindent
(1)
If $\gamma$ is legal (see Definition-Remark \ref{legal+} (1) just below), or more generally, if $f(\gamma)$ is reduced 
(so that one has $f(\gamma) = \gamma'$), 
then every turn used by $\gamma$ is the preimage turn of some turn used by $\gamma'$.

\smallskip
\noindent
(2)
If $f(\gamma)$ is not reduced, the conclusion stated in (1) above is in general wrong.
\end{rem}

\begin{rem}
\label{D-one}
(1)
The above defined induced map on turns can be viewed alternatively from the following point of view:

One first defines a self-map $Df$ on the (finite) set of edges of $\cal G$ by declaring $Df(e)$ to be the first edge of the edge path $f(e)$. One then obtains the induced maps on turns (sometimes denoted by $D^2f$) via:
$$\bar e \circ \chi \circ e' \quad \mapsto \quad  \bar{Df(e)} \circ f(\chi) \circ D(e')$$

\smallskip
\noindent
(2)
From the finiteness of the edge set of $\cal G$ we see that the map $Df$ 
is 
on any edge $e$ eventually periodic. If all vertex groups of $\cal G$ are trivial, the same is true for the induced map $D^2f$ on turns, so that every legal turn is eventually periodic, and both, the maximal seize of a periodic orbit as well as the maximal number of iterations needed before a legal turn becomes periodic or an illegal turn becomes degenerate, is determined by the number of edges in $\cal G$.

\smallskip
\noindent
(3)
If $\cal G$ has non-trivial vertex groups, 
then in general there will be 
legal turns which have infinite orbits (i.e. they are not eventually periodic). Hence the question, whether there is an upper bound on the number of iterations needed before any illegal turn becomes degenerate, is more delicate. It will be addressed below in Lemma \ref{crucial-fin}.
\end{rem}

The following terminology is inherited from the case where all vertex groups are trivial:

\begin{defn-rem}
\label{legal+}
Let $f: \cal G \to \cal G$ be a graph-of-spaces morphism.
\begin{enumerate}
\item
A path $\gamma$ in $\cal G$ is {\em legal} if the image path $f^t(\gamma)$ is reduced, for 
any integer $t \geq 1$.
\item
The map $f$ is a {\em train track map} if every edge (understood as edge path of length 1) is legal.
\item
A turn is {\em legal} if the $f^t$-image turns 
are non-degenerate, for 
all 
$t \geq 1$. Otherwise the turn is called {\em illegal}.
\item
A path $\gamma$ 
turns out to be 
legal if and only if every turn used by $\gamma$ 
is legal.
\item
A concatenation $\gamma_0 = \gamma \circ \chi \circ \gamma'$ of two 
non-zero 
paths $\gamma$ and $\gamma'$ 
through a connecting zero path $\chi$ 
is 
called 
{\em legal} if the turn 
used 
by $\gamma_0$ 
at the concatenation vertex 
space 
is legal.
\item
A closed 
path 
concatenation $\gamma_0$ as in (\ref{cyclic-concat}) is called {\em cyclically legal} if every turn in any cyclic permutation of $\gamma_0$ is legal.
\end{enumerate}
\end{defn-rem}

The following statements are easy to derive from the above listed facts, but since they are crucial for the following sections, we carry through the proof.

\begin{lem}
\label{crucial-fin}
Let $f: \cal G \to \cal G$ be an expanding 
graph-of-spaces morphism, and assume that 
$f$ 
satisfies condition (1) of Hypothesis \ref{running-hyp}.
Then the following facts are true:
\begin{enumerate}
\item
For any two edges $e, e'$ of $\cal G$ there is at most one connecting path $\chi$ such that $e \circ \chi \circ e'$ has a degenerate image turn.
\item
Every non-degenerate $f$-periodic turn is legal.
\item
Every turn in $\cal G$ has only finitely many preimage turns.
\item
There are only finitely many illegal turns in $\cal G$.
\item
There exists an exponent $t_0 \geq 0$ such that for any illegal turn the $f^{t_0}$-image turn is degenerate.
\end{enumerate}
\end{lem}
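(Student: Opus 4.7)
The five parts are tightly linked. The injectivity clause in Hypothesis \ref{running-hyp}(1) yields a ``uniqueness of connecting path'' principle that powers parts (1), (3) and (4); part (2) reduces to the fact that $f$ preserves degeneracy; and (5) is then an immediate corollary of (4) together with (2). My plan is to establish (1) first, transport the same uniqueness argument into (3) and (4), handle (2) independently, and assemble (5) at the end.

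For (1), let $v = \tau(e) = \tau(\bar{e'})$ and suppose two connecting paths $\chi_1, \chi_2 \subset X_v$ both produce degenerate image turns. Then each $f(\chi_i)$ is a null-homotopic loop in $X_{f(v)}$ (at the attachment point of the common edge $e'_1 = \overline{e'_2}$, which depends only on $e, e'$ and not on $\chi_i$), so the loop $\chi_1 \bar{\chi_2}$ in $X_v$ has trivial $f_*$-image; the injectivity of $\pi_1 X_v \to \pi_1 X_{f(v)}$ then gives $\chi_1 \simeq \chi_2$ rel endpoints. For (2), I first observe that $f$ sends any degenerate turn to a degenerate turn, since contractibility of $\chi$ passes to $f(\chi)$. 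If a periodic turn $\tau$ of period $n$ had $f^t(\tau)$ degenerate for some $t \geq 1$, then $f^{t+s}(\tau)$ would remain degenerate for every $s \geq 0$; choosing $s$ with $n \mid t+s$ gives $\tau = f^{t+s}(\tau)$ degenerate, contradicting the hypothesis. Hence every $f^t(\tau)$ is non-degenerate, i.e. $\tau$ is legal.

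For (3), preimage turns of a fixed $e'_1 \circ \chi' \circ e'_2$ are indexed by the finitely many edge pairs $(e_1, e_2)$ with $e'_1$ the last edge of $f(e_1)$ and $e'_2$ the first edge of $f(e_2)$; for each such pair, the uniqueness argument from (1) --- applied to the condition $f(\chi_i) \simeq \chi'$ in place of $f(\chi_i)$ contractible --- pins down $\chi$ up to homotopy rel endpoints. For (4), an illegal turn $e_1 \circ \chi \circ e_2$ must satisfy that $f^t(\chi)$ is a contractible loop in $X_{f^t(v)}$ for some $t \geq 1$ (together with the compatibility on last/first edges of $f^t(e_i)$ that makes this a loop at all). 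Since the edge set of $\cal G$ is finite, only finitely many ordered pairs $(e_1, e_2)$ can arise. For a fixed such pair, suppose two connecting paths $\chi, \chi'$ both yield illegal turns, with $f^{t_1}(\chi)$ and $f^{t_2}(\chi')$ contractible loops; taking $T := \max(t_1, t_2)$, both $f^T(\chi)$ and $f^T(\chi')$ are contractible (again using that $f$ preserves contractibility). The loop $\chi \bar{\chi'}$ in $X_v$ then has trivial $f^T_*$-image, and the iterated injectivity $f^T_* = (f_*)^T$ forces $\chi \simeq \chi'$. Thus at most one $\chi$ per pair, giving only finitely many illegal turns.

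Finally, (5) follows by setting $t_0 := \max\{t(\tau) : \tau \text{ illegal}\}$, where $t(\tau) \geq 1$ is the least exponent with $f^{t(\tau)}(\tau)$ degenerate; this maximum is finite by (4), and the ``$f$ preserves degeneracy'' observation from (2) ensures that $f^{t_0}(\tau)$ is degenerate for every illegal $\tau$. The main subtlety I anticipate lies in part (4): the uniqueness argument must be applied at a common large iterate $T$, which in turn relies on the fact that once an image turn becomes degenerate it stays degenerate, so that different candidate exponents are all subsumed by a single $T$. The notational bookkeeping around attachment points of edges in vertex spaces (see Convention \ref{ignore}) also needs care, so that ``degenerate image turn'' really identifies a contractible loop at a single well-defined point in $X_{f^t(v)}$ and not merely a contractible path between a priori different points.
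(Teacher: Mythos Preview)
Your proof is correct. Parts (1), (2), (3) follow the same reasoning as the paper (which is terse there, saying only that they follow from the injectivity hypothesis and the definition of ``legal''). The genuine difference lies in how you handle (4) and (5): you prove (4) first, by a direct injectivity argument --- for each ordered edge pair $(e_1,e_2)$ at most one connecting path $\chi$ can give an illegal turn, since two such $\chi,\chi'$ would have $f^T(\chi\bar\chi')$ contractible for $T=\max(t_1,t_2)$ and hence $\chi\simeq\chi'$ by injectivity of $f^T_*$ --- and then deduce (5) as a maximum over the finite set of illegal turns. The paper instead goes through the edge map $Df$ from Remark~\ref{D-one}: it observes that a turn whose edges $\bar e, e'$ are $Df$-periodic is illegal if and only if it is degenerate, takes $t_0$ large enough that $Df^{t_0}$ lands every edge in its periodic part, and thereby proves (5) first; then (4) follows by applying (3) $t_0$ times to the finitely many degenerate turns. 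Your route is slightly more direct for (4); the paper's route has the advantage of tying $t_0$ to the $Df$-dynamics on the finite edge set, which meshes naturally with the algorithmic \textsc{Step 1} in section~\ref{sec:algorithm} (where one iteratively computes preimage turns of degenerate turns until the list stabilizes). Either order is entirely sound.
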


\begin{proof}
We recall that $\cal G$ has only finitely may edges, and that connecting paths are considered to be equal if they only differ by a homotopy relative endpoints 
in their vertex spaces. Hence the claims (1) and (3) are a direct consequence of our injectivity hypothesis on the induced vertex group homomorphisms. Claim (2) follows directly from the definition of a ``legal'' turn. In order to prove claims (4) and (5), we invoke the map $Df$ 
from 
part (1) of Remark \ref{D-one} 
and 
observe for any turn $e \circ \chi \circ e'$, where $\bar e$ and $e'$ are assumed to be $Df$-periodic, that the turn is illegal if and only if one has $\bar e = e'$ and $\chi$ is contractible relative endpoints; in other words: if and only if the turn is degenerate. From Remark \ref{D-one} (2) we know that there is an exponent $t_0$ such that for any turn in $\cal G$ the $Df$-periodicity assumption in the previous sentence is true for the $f^{t_0}$-image turn, which proves assertion (5). Claim (4) then follows by applying assertion (3) $t_0$ times 
to any degenerate turn in $\cal G$.
\end{proof}

\medskip

The statements (4) and (5) of Lemma \ref{crucial-fin} are the first finiteness results in our setting that go beyond what is known for classical train track maps. A second new finiteness ingredient, which is crucially used in the next section, will be presented now. It is based on the notion of ``special'' and ``pre-special'' paths, which have been invented specifically for this purpose:

\begin{defn}
\label{special-words}
Let $f: \cal G \to \cal G$ be a graph of spaces morphism.
\begin{enumerate}
\item
For any integer $t \geq 1$ a turn in $\cal G$ 
is called {\em $t$-special}  if 
it 
is 
used by any of the  
edge path 
$f^{t}(e_0)$ or 
$f^{t}(e_1 \circ \chi \circ e_2)$, 
where $e_0, e_1$ and $e_2$ are edges of $\cal G$ and the vertex 
space 
$X_v$ 
which contains the connecting path $\chi$ 
is 
inessential (i.e. $\pi_1 X_v$ is trivial).
\item
Any 
legal 
path $\gamma$ (not necessarily an edge path) in $\cal G$ is called {\em pre-$t$-special} if 
every turn 
used by $f^t(\gamma)$ is $t$-special.
\end{enumerate}
\end{defn}

\begin{lem}
\label{zero-finite}
Let $f: \cal G \to \cal G$ be an expanding train track map, and assume that Hypothesis \ref{running-hyp} (1) is satisfied. Then one has:
\begin{enumerate}
\item
For any $t \geq 1$ there are only finitely many $t$-special turns in $\cal G$.
\item
For any integers $t \geq 1$ and $k \geq 0$ there are only finitely many 
legal 
edge paths $\gamma$ in $\cal G$ of length $|\gamma| \leq k$ which are pre-$t$-special.
\end{enumerate}
\end{lem}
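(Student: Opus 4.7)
\medskip
\noindent
\emph{Proof plan for Lemma \ref{zero-finite}.}

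\medskip

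\noindent
\textbf{Part (1).} The key observation is that an \emph{inessential} vertex $v$ has $\pi_1 X_v$ trivial, so every path contained in $X_v$ is determined up to homotopy in $X_v$ by its endpoints. Since $\cal G$ has only finitely many edges, there are only finitely many choices of $e_0$, and only finitely many pairs $(e_1,e_2)$ which meet at a common inessential vertex $v = \tau(e_1) = \tau(\bar e_2)$. For each such pair there is, up to homotopy in $X_v$, a unique connecting path $\chi$, so only finitely many edge paths of the forms $e_0$ and $e_1 \circ \chi \circ e_2$ arise. Since $f$ is a graph-of-spaces morphism, the $f^t$-image of each of these edge paths is an edge path of finite length, hence uses only finitely many turns. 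Taking the union of all turns used by these finitely many $f^t$-images yields, by definition, precisely the set of $t$-special turns.

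\medskip

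\noindent
\textbf{Part (2).} The proof proceeds in two steps: first one bounds the set of \emph{turns} that a pre-$t$-special legal edge path $\gamma$ can use, and then one bounds the number of such $\gamma$ of bounded length. For the first step, since $\gamma$ is legal, each of the intermediate paths $\gamma, f(\gamma), \ldots, f^{t-1}(\gamma)$ is legal as well, and each of $f(\gamma), \ldots, f^t(\gamma)$ is reduced. Applying Remark \ref{forgotten}(1) $t$ times in succession shows that every turn used by $\gamma$ is a $t$-fold iterated preimage turn of some turn used by $f^t(\gamma)$. By the pre-$t$-special hypothesis, every such turn in $f^t(\gamma)$ is $t$-special, and by part (1) the set of $t$-special turns is finite. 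Combining this with Lemma \ref{crucial-fin}(3), which bounds the number of preimage turns of any fixed turn, and iterating $t$ times, one obtains a finite set $\mathcal{T}$ of turns containing every turn used by any pre-$t$-special legal edge path.

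\medskip

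For the second step, any edge path $\gamma = e_1 \circ \chi_1 \circ e_2 \circ \ldots \circ \chi_{r-1} \circ e_r$ of length $r \leq k$ is completely determined by its first edge $e_1$ together with the sequence of turns $e_i \circ \chi_i \circ e_{i+1}$ for $i = 1, \ldots, r-1$, since each such turn records simultaneously the connecting path $\chi_i$ and the next edge $e_{i+1}$. With at most $k-1$ turns drawn from the finite set $\mathcal{T}$ and $e_1$ drawn from the finite edge set of $\cal G$, the number of legal pre-$t$-special edge paths of length $\leq k$ is bounded above by $(\mbox{number of edges})\cdot |\mathcal{T}|^{k-1}$, and hence is finite.

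\medskip

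\noindent
\emph{Main obstacle.} The subtle step is the backward passage from the \emph{image} turns used by $f^t(\gamma)$ to the turns used by $\gamma$ itself. This is precisely where legality of $\gamma$ is indispensable, since Remark \ref{forgotten}(1) requires reducedness of the intermediate $f$-images in order to conclude that turns of a path are preimage turns of the turns of its image. Once this bridge is available, the finiteness of preimage turns from Lemma \ref{crucial-fin}(3), iterated $t$ times, converts a finite target set (the $t$-special turns) into a finite source set.
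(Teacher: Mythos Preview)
Your proof is correct and follows essentially the same approach as the paper. The only cosmetic difference is that in part (2) the paper applies Remark \ref{forgotten}(1) and Lemma \ref{crucial-fin}(3) once with $f^t$ in place of $f$, whereas you iterate these $t$ times with $f$; both routes yield the same finite set of admissible turns and then the same counting argument.
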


\begin{proof}
Statement (1) follows directly from the properties listed in Definition \ref{special-words}, since there are only finitely many edges in $\cal G$ and since any inessential vertex space $X_v$ admits only finitely many distinct connecting paths (up to homotopy in $X_v\,$, as usual).

For statement (2) we invoke 
Remark \ref{forgotten} (1) 
and 
Lemma \ref{crucial-fin} (3), for $f^t$ in place of $f$, in order to deduce from (1) that there only finitely many turns that the path $\gamma$ can possibly use. It thus follows from the length restriction on $\gamma$ that there are only finitely many paths $\gamma$ which satisfy the conditions listed in (2).
\end{proof}

\begin{lem}
\label{special-applied}
Let $f: \cal G \to \cal G$ be 
as in Lemma \ref{zero-finite}, and 
let $\gamma$ and $\gamma'$ be two 
non-zero
legal paths with 
initial points that lie in a 
common vertex space.
Assume furthermore that $\gamma$ and $\gamma'$ have 
distinct first edges, 
and endpoints that may lie in the interior of an edge. 
If $\gamma$ or $\gamma'$ consist only of an edge segment, then we require that the edges which contain these segments are distinct, or distinct from the first edge of the other path.

Assume that for some integer $t \geq 1$ one has  $f^t(\gamma) = f^t(\gamma')$.
Then every 
turn in 
$f^t(\gamma) = f^t(\gamma')$ is $t$-special.
\end{lem}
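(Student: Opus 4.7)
The plan is to classify each turn $\theta$ in $\delta := f^t(\gamma) = f^t(\gamma')$ according to how it sits in the edge decomposition of $\delta$ induced by $\gamma$, and simultaneously by $\gamma'$. Write the canonical vertex-prolongations as
$$\gamma = e_1 \circ \chi_1 \circ e_2 \circ \ldots \circ \chi_{r-1} \circ e_r, \qquad \gamma' = e'_1 \circ \chi'_1 \circ \ldots \circ \chi'_{r'-1} \circ e'_{r'},$$
with $e_1 \neq e'_1$, and set $v_i := \tau(e_i)$, $v'_j := \tau(e'_j)$. Viewed from $\gamma$, each turn $\theta$ of $\delta$ either lies \emph{internally} to some $f^t(e_i)$ (in which case $\theta$ is $t$-special by the first clause of Definition \ref{special-words}), or it sits at a $\gamma$-boundary where it is the turn used by the length-two image $f^t(e_i \circ \chi_i \circ e_{i+1})$; the latter is $t$-special exactly when $X_{v_i}$ is inessential. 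An identical dichotomy applies from $\gamma'$'s viewpoint with $\chi'_j, v'_j$. If $\gamma$ or $\gamma'$ consists of a single edge segment the argument terminates immediately, since then $f^t(\gamma)$ is a sub-path of some $f^t(e_1)$ and every one of its turns is automatically $t$-special.

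What remains is the "bad case" in which $\theta$ sits at a $\gamma$-boundary with $X_{v_i}$ essential \emph{and} at a $\gamma'$-boundary with $X_{v'_j}$ essential; I aim to derive a contradiction. Since the boundary vertex spaces coincide inside $\delta$, one has $X_{f^t(v_i)} = X_{f^t(v'_j)}$, and since $f$ permutes the essential vertex spaces by Hypothesis \ref{running-hyp}(1), this forces $v_i = v'_j =: v$. Define
$$\gamma_0 := e_1 \circ \chi_1 \circ \ldots \circ \chi_{i-1} \circ e_i, \qquad \gamma'_0 := e'_1 \circ \chi'_1 \circ \ldots \circ \chi'_{j-1} \circ e'_j;$$
these are legal edge paths from $v_0$ to $v$ with distinct first edges, and by construction $f^t(\gamma_0) = f^t(\gamma'_0)$ (both equal the initial portion of $\delta$ up to the position of $\theta$).

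Consider the loop $\ell := \gamma_0 \circ \overline{\gamma'_0}$ based at $v_0$. The equality $f^t(\gamma_0) = f^t(\gamma'_0)$ implies $f^t_*(\ell) = 1$ in $\pi_1(\cal G, f^t(v_0))$. On the other hand, since $\gamma_0$ and $\overline{\gamma'_0}$ are already reduced, any reduction of $\ell$ must propagate inward from the $v$-junction by successive pairwise matches $e_{i-k} = e'_{j-k}$ together with cancellations of the intervening vertex-group elements; a complete cancellation would in particular force $e_1 = e'_1$, contradicting our hypothesis. The reduction of $\ell$ therefore halts at a non-degenerate turn and has positive edge-length, so $\ell$ represents a non-trivial element of $\pi_1(\cal G, v_0)$. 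Since $f^t_*$ is injective by Hypothesis \ref{running-hyp}(1), this is a contradiction, so the bad case cannot arise and every turn in $\delta$ is $t$-special.

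The main obstacle is the non-triviality of the loop $\ell$: it rests on the normal-form theorem for graphs-of-groups combined with the observation that cancellation in $\gamma_0 \circ \overline{\gamma'_0}$ can only propagate inward synchronously from the common terminal vertex $v$, so it must halt before reaching the mismatched first edges $e_1 \neq e'_1$.
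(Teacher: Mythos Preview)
Your argument is correct and follows essentially the same strategy as the paper: you set up the identical dichotomy (each turn of $\delta$ is either interior to some $f^t(e_i)$, hence automatically $t$-special, or an image turn from both $\gamma$ and $\gamma'$), and you rule out the doubly-essential boundary case via Hypothesis~\ref{running-hyp}(1). The only difference is in how the contradiction is extracted in that last case. The paper lifts $\gamma,\gamma'$ to the Bass--Serre tree $\tilde{\cal G}_X$ with a common initial vertex; since the lifts diverge immediately, the two preimage turns sit at distinct tree vertices, and injectivity of $\tilde f^t_X$ on essential vertices forces one of them to be inessential. You instead stay downstairs: from the permutation of essential vertex spaces you get $v_i=v'_j$, which lets you close up the loop $\ell=\gamma_0\circ\overline{\gamma'_0}$, and then the normal form plus $e_1\neq e'_1$ gives $\ell\neq 1$ while $f^t_*(\ell)=1$, contradicting $\pi_1$-injectivity. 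These are two packagings of the same obstruction; the tree version is a bit more direct, while yours makes the use of both clauses of Hypothesis~\ref{running-hyp}(1) explicit.
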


\begin{proof}
We lift $\gamma$ and $\gamma'$ to paths $\tilde \gamma$ and $\tilde\gamma'$ in the Bass-Serre 
tree 
$\tilde{\cal G}_X$, which are chosen such that $\tilde \gamma$ and $\tilde\gamma'$ have a common initial vertex. 
By Hypothesis \ref{running-hyp} (1) 
any lift 
$\tilde f_X^t: \tilde{\cal G}_X \to \tilde{\cal G}_X$ of the map $f^t$ 
acts 
injectively 
on the 
set of 
essential vertices of $\tilde{\cal G}_X$ 
(by which we mean those vertices that are lifts of essential vertices in $\cal G_X$). 
Now 
every turn used by 
$\tilde f_X^t(\tilde \gamma) = \tilde f_X^t(\tilde \gamma')$, which is not used by the $\tilde f_X^t$-image of some edge from either $\tilde \gamma$ or $\tilde \gamma'$, must be the image turn of both, a turn in $\tilde \gamma$ and a turn in $\tilde \gamma'$. Since $\tilde \gamma$ and $\tilde\gamma'$ have a common initial vertex but distinct first edges (as assumed in the statement of the lemma), and since $\tilde{\cal G}_X$ is a tree, these two turns in $\tilde \gamma$ and $\tilde \gamma'$ must 
take place at distinct vertices of 
$\tilde{\cal G}_X$. But then it follows from the above 
injectivity 
on 
the 
essential vertices of $\tilde{\cal G}_X$ that 
one of those vertices 
must be inessential. It follows from the conditions listed in Definition \ref{special-words} (1) that in either case, every turn used by $f^t(\gamma) = f^t(\gamma')$ is $t$-special.
\end{proof}

\begin{lem}
\label{first-finiteness}
Let $f: \cal G \to \cal G$ be a expanding train track map, and assume that conditions (1) and (2) of Hypothesis \ref{running-hyp} are satisfied. 
For any $t \geq 1$ 
let $\cal V(f^t)$ be the set of edge paths $\eta = \bar \gamma_1 \circ \chi \circ \gamma_2$ 
in $\cal G$ 
with the following properties:

Assume that $\gamma_1$ and $\gamma_2$ are two 
legal edge paths with 
initial points in the vertex space $X_v$ which also contains the connecting path $\chi$, and that $\gamma_1$ and $\gamma_2$ have distinct first edges. Assume furthermore that $f^t(\chi)$ is contractible in 
$X_{f^t(v)}$, and define $\gamma'_1$ and $\gamma'_2$ to be the initial subpaths of $\gamma_1$ and $\gamma_2$ respectively which satisfy $f^t(\gamma'_1) = f^t(\gamma'_2)$, and which are maximal 
with respect to this property. We require furthermore that each $\gamma'_i$ 
contains all 
connecting paths 
of $\gamma_i$, 
and 
that $\gamma'_i$ 
overlaps non-trivially with the last edge of $\gamma_i$, 
allowing also 
the case $\gamma'_i = \gamma_i$.
In particular, each $\gamma'_i$ 
must be 
non-zero, 
$\gamma_i$ is the canonical vertex-prolongation of $\gamma'_i$, 
and the turn at $\chi$ must be illegal.

Then the set $\cal V(f^t)$ is finite.
\end{lem}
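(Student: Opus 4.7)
The plan is to first apply Lemma~\ref{special-applied} to the pair $(\gamma'_1, \gamma'_2)$: they are non-zero legal paths (being initial subpaths of the legal edge paths $\gamma_1, \gamma_2$), they both start in the common vertex space $X_v$, they have distinct first edges (those of $\gamma_1, \gamma_2$), and by hypothesis $f^t(\gamma'_1) = f^t(\gamma'_2)$; the lemma then ensures that every turn used by $f^t(\gamma'_i)$ is $t$-special. Since $\gamma_i$ is the canonical vertex-prolongation of $\gamma'_i$, any extra turn appearing in $f^t(\gamma_i)$ beyond those of $f^t(\gamma'_i)$ lies entirely inside the edge path $f^t(e)$ for the last edge $e$ of $\gamma_i$, and is thus $t$-special by the first clause of Definition~\ref{special-words}~(1). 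Hence $\gamma_1$ and $\gamma_2$ are pre-$t$-special legal edge paths.

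Next I would bound $|\gamma_i| = |\gamma'_i|$ via the cancellation bound of Hypothesis~\ref{running-hyp}~(2). Since $\gamma_1, \gamma_2$ have distinct first edges, the turn of $\eta$ at $\chi$ is non-degenerate, so $\eta$ is a reduced edge path. Set $\eta^{(s)} := [f^s(\eta)]$. Legality of $\gamma_i$ implies that each $\eta^{(s)}$ has the form $\bar\beta_1^{(s)} \circ f^s(\chi) \circ \beta_2^{(s)}$ with $\beta_i^{(s)}$ legal and reduced, so its only possibly illegal turn lies at $f^s(\chi)$. Hypothesis~\ref{running-hyp}~(2) applied to $\eta^{(s-1)}$ therefore bounds by $C$ the (single) backtracking subpath of $f(\eta^{(s-1)})$ erased when passing to $\eta^{(s)}$. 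Setting $L := \max_e |f(e)|$, previously cancelled subpaths persist and get inflated by a factor at most $L$ under one further iterate of $f$, so the total cancellation $T_s := |f^s(\eta)| - |\eta^{(s)}|$ satisfies $T_0 = 0$ and the recurrence $T_{s+1} \leq L\cdot T_s + C$, giving $T_t \leq C_t$ for a constant $C_t$ depending only on $C, L, t$. The edge path $\bar{f^t(\gamma'_1)} \circ f^t(\chi) \circ f^t(\gamma'_2)$ is a backtracking subpath of $f^t(\eta)$ of length $2|f^t(\gamma'_1)|$ (using $f^t(\gamma'_1) = f^t(\gamma'_2)$ and the contractibility of $f^t(\chi)$), so $2|f^t(\gamma'_1)| \leq T_t \leq C_t$. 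Since each edge contributes $|f^t(e)| \geq 1$ to its $f^t$-image, one has $|\gamma_i| = |\gamma'_i| \leq |f^t(\gamma'_i)| \leq C_t/2$.

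To finish, Lemma~\ref{zero-finite}~(2) gives only finitely many pre-$t$-special legal edge paths of length at most $C_t/2$, bounding the choices of $\gamma_1, \gamma_2$; and for given first edges $e_1, e_2$ the connecting path $\chi$ must make $\bar e_1 \circ \chi \circ e_2$ an illegal turn (its $f^t$-image being degenerate, as the first edges of $f^t(\gamma_1)$ and $f^t(\gamma_2)$ agree and $f^t(\chi)$ is contractible), of which Lemma~\ref{crucial-fin}~(4) allows only finitely many. Combining, $\cal V(f^t)$ is finite.

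The main obstacle is the iterated cancellation estimate in the middle paragraph: the backtracking must be propagated through successive iterates of $f$ in order to bound the particular backtracking subpath associated with $\gamma'_1, \gamma'_2$. This relies on the key observation that, thanks to the legality of the $\gamma_i$, each $\eta^{(s)}$ has a \emph{unique} potentially illegal turn, located at $f^s(\chi)$, so that at every step Hypothesis~\ref{running-hyp}~(2) can be applied to a reduced path with a single cancellation site.
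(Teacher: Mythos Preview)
Your proof is correct and follows essentially the same route as the paper: apply Lemma~\ref{special-applied} to see that the $\gamma_i$ are pre-$t$-special, bound their length via the cancellation hypothesis, then finish with Lemma~\ref{zero-finite}~(2) and Lemma~\ref{crucial-fin}~(4). The only real difference is that the paper invokes Hypothesis~\ref{running-hyp}~(2) directly for $f^t$ (tacitly using that a cancellation bound for $f$ yields one for $f^t$; this is made explicit later in STEP~3 of section~\ref{sec:algorithm}), whereas you derive the $f^t$-bound carefully via the recurrence $T_{s+1} \leq L\cdot T_s + C$ --- so your middle paragraph is extra work, not a different idea. One small notational slip: the connecting path at the tip of $\eta^{(s)}$ is not literally $f^s(\chi)$ but a new connecting zero path in $X_{f^s(v)}$; this does not affect the argument, since all you use is that $\eta^{(s)}$ has at most one potentially illegal turn.
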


\begin{proof}
Let $\eta = \bar \gamma_1 \circ \chi \circ \gamma_2$ be an edge path in $\cal V(f^t)$.
By hypothesis every 
connecting path 
on 
either 
of the 
two
$\gamma_i$
belongs to $\gamma'_i$. From Lemma \ref{special-applied} we know that 
$\gamma'_1$ and $\gamma'_2$ 
are both 
pre-t-special. Since each $\gamma_i$ is the canonical vertex-prolongation of $\gamma'_i$, it follows that 
that both $\gamma_i$ are also pre-t-special.

Furthermore, 
the path $f^t(\bar \gamma'_1 \circ \chi \circ \gamma'_2)$ is a backtracking subpath of $f^t(\eta)$. Hence 
the length of each $\gamma_i$ is bounded, by 
Hypothesis \ref{running-hyp} (2)
and the 
convention that 
$|f(e)| \geq 1$ for any edge $e$ of $\cal G$.
Hence Lemma \ref{zero-finite} (2) shows that
there are only finitely many choices for $\gamma_1$ and $\gamma_2$. The hypothesis that the turn at $\chi$ is illegal thus proves 
by Lemma \ref{crucial-fin} (4) 
the finiteness of the set $\cal V(f^t)$.
\end{proof}

\begin{rem}
\label{V-sets}
Since the sets $\cal V(f^t)$ from Lemma \ref{first-finiteness} will play an important role in the next sections, we'd like to note right away some basic observations about them:
\begin{enumerate}
\item
The set $\cal V(f^t)$ contains every $f^t$-preimage turn of any degenerate turn. Conversely, every path $\eta = \bar \gamma_1 \circ \chi \circ \gamma_2$ in $\cal V(f^t)$ with lengths $|\gamma_1| = |\gamma_2| = 1$ must be the $f^t$-preimage turn of some degenerate turn.
\item
For any path $\eta = \bar \gamma_1 \circ \chi \circ \gamma_2$ from $\cal V(f^t)$ with $|\gamma_1| \geq 2$ or $|\gamma_2| \geq 2$ we 
write both $\gamma_i$ 
as legal concatenation $\gamma_i = \check \gamma_i \circ \chi_i \circ  e_i$, where $e_i$ is the last edge of $\gamma_i$ and $\check \gamma_i$ is its complementary initial subpath. (Here one of $\check \gamma_1$ or $\check \gamma_2$ could be zero, but not both.)

We consider the image paths $f^t(\gamma_1)$ and  $f^t(\gamma_2)$ and 
the corresponding 
legal concatenations $f^t(\gamma_i) = f^t(\check \gamma_i) \circ f^t(\chi_i) \circ  f^t(e_i)$, 
and 
we recall from the definition of $\cal V(f^t)$ that $f^t(\gamma_1)$ and  $f^t(\gamma_2)$ have a common initial subpath $\gamma_0$ which contains the initial subpaths $f^t(\check \gamma_1)$ and  $f^t(\check \gamma_2)$. Furthermore, the end of $\gamma_0$ must overlap 
non-trivially with both, the terminal subpath $f^t(e_1)$ of $f^t(\gamma_1)$ and the terminal subpath $f^t(e_2)$ of $f^t(\gamma_2)$.
We can thus distinguish the following three cases:
\begin{enumerate}
\item
If $f^t(\check \gamma_1)$ is strictly longer than $f^t(\check \gamma_2)$, then $f^t(\check \gamma_1)$  and $f^t(e_2)$ must have a 
non-zero overlap on 
the path $\gamma_0$, so that 
the path $\bar{\check\gamma}_1 \circ \chi \circ \gamma_2$ belongs to $\cal V(f^t)$.
\item
Similarly, if $f^t(\check \gamma_2)$ is strictly longer than $f^t(\check \gamma_1)$, then the path $\bar{\gamma}_1 \circ \chi \circ \check \gamma_2$ belongs to $\cal V(f^t)$.
\item
If $|f^t(\check \gamma_1)| = |f^t(\check \gamma_2)|$, then one has indeed $f^t(\check \gamma_1) = f^t(\check \gamma_2)$, so that $\bar{\check\gamma}_1 \circ \chi \circ \check \gamma_2$ belongs to $\cal V(f^t)$.
\end{enumerate}
\item
Consider any path $\eta = \bar \gamma_1 \circ \chi \circ \gamma_2$ in $\cal V(f^t)$, and let $\gamma'_1$ and $\gamma'_2$ be initial sub-edge-paths of the edge paths $\gamma_1$ and $\gamma_2$ respectively. Then the resulting path $\eta' = \bar \gamma'_1 \circ \chi \circ \gamma'_2$ is not necessarily an element of $\cal V(f^t)$. However, after comparing $|f(\gamma'_1)|$ to $|f(\gamma'_2)|$, one can 
take off from the $\gamma'_i$ with the longer $f^t$-image 
iteratively 
terminal edges (together with the connecting path at 
their 
beginning) until one finds an initial sub-edge-path $\gamma''_i$ of $\gamma'_i$ which has the property that the $f^t$-image of the terminal edge of $\gamma''_i$ is a subpath of $f^t(\gamma''_i)$ that overlaps 
non-trivially with the image path $f^t(\gamma'_j)$ of the ``other'' 
initial subpath $\gamma'_j$ (so that $\{i, j\} = \{1, 2\}$).

We set $\gamma''_j := \gamma'_j$ and obtain thus a 
subpath $\eta'' = \bar \gamma''_1 \circ \chi \circ \gamma''_2$ of the path $\eta' = \bar \gamma'_1 \circ \chi \circ \gamma'_2$. From the above construction we see that $\eta''$ agrees with $\eta'$ up to an initial or terminal sub-edge-path, and 
that $\eta''$ is an edge path canonically derived from $\eta'$ which 
does belong to $\cal V(f^t)$.

\item
Finally, we observe that one has $\cal V(f^t) \subset \cal V(f^{t'})$ for any $t' \geq t \geq 1$. This is indeed a direct consequence of the trivial observation that for any points $x_1$ on $\gamma_1$ and $x_2$ on $\gamma_2$ with $f^t(x_1) = f^t(x_2)$ one also has $f^{t'}(x_1) = f^{t'}(x_2)$.
Hence for any edge path $\eta = \bar \gamma_1 \circ \chi \circ \gamma_2$ in $\cal V(f^t)$ the maximal initial subpaths (``paths'' but not ``edge paths'' !) $\gamma'_1$ of $\gamma_1$ and $\gamma'_2$ of $\gamma_2$ with $f^t(\gamma'_1) = f^t(\gamma'_2)$, which are already considered in the statement of Lemma \ref{first-finiteness}, one also has $f^{t'}(\gamma'_1) = f^{t'}(\gamma'_2)$, which shows that all condition are satisfied to conclude that $\eta$ also belongs to $\cal V(f^{t'})$.
\end{enumerate}
\end{rem}

\section{INP paths in $\cal G$}
\label{sec:INP-paths}

\begin{convention}
\label{hyp-sec-4}
Throughout this section we assume that $\cal G$ is a graph-of-spaces and that $f: \cal G \to \cal G$ is an expanding train track map which satisfies the conditions (1) and (2) of Hypothesis \ref{running-hyp}.
\end{convention}

A 
reduced 
path 
as in Convention \ref{path} 
(but not necessarily an edge path \!\!\! !) $\eta =  \gamma \circ \chi \circ \gamma'$ in $\cal G$ is an {\em INP path} if both, $\gamma$ and $\gamma'$ are 
non-zero legal paths, 
and if for some exponent $t \geq 1$ the reduced path $[f^t(\eta)]$ is equal to $\eta$. In this case the 
turn used by $\eta$ at the concatenation vertex 
space $X_v$, which is 
defined by $\chi$ and the adjacent edges 
(or edge segments), 
is 
non-degenerate, since $\eta$ is assumed to be reduced.
But this turn is  
necessarily illegal, 
since 
we assume that $f$ is expanding,
so that the $f^t$-image turn at $X_{f^t(v)}$ must be degenerate. 
This ``concatenation vertex space'' which contains $\chi$ is called the {\em tip} of the INP path $\eta$, 
and the two maximal legal subpaths $\gamma$ and $\gamma'$ are the {\em legal branches} of $\eta\,$; 
we use this terminology also for other paths which are an illegal concatenation of two 
non-zero legal paths
through some connecting zero path.

For any INP path $\eta$ we consider 
the backtracking subpath $\check \eta$ of 
$f(\eta)$ 
which starts at the tip of $[f(\eta)]$, runs up to the tip of $f(\eta)$, and then doubles back to the tip of $[f(\eta)]$; in fact, $\check \eta$ is the maximal backtracking subpath of $f(\eta)$. The subpath $\eta_1 = \gamma_1 \circ \chi \circ \gamma'_1$ of $\eta$ which is mapped by $f$ to 
the 
backtracking subpath $\check \eta$ in $f(\eta)$ will be called {\em the $f$-backtracking subpath} of $\eta$. We denote the $f^t$-backtracking subpath of $\eta$ by $\eta_t = \gamma_t \circ \chi \circ \gamma'_t$, and 
observe that 
for any $t' \geq t \geq 1$ the path $\eta_{t'}$ contains $\eta_{t}$ as subpath.

\begin{rem}
\label{sub-edge-path}
Since it plays a role in what follows, we would like to point out that the above maximal backtracking subpath $\check \eta$ of $f(\eta)$ is an edge path: it starts and ends in an edge of $\cal G$. On the other hand, the ``preimage'' subpaths $\eta_t$ of $\eta$ are in general not edge paths, but they are ``paths'' 
in the sense of Convention \ref{path} 
in that they start and end with non-trivial edge segments.
\end{rem}

From the existence of a cancellation bound as in 
Hypothesis \ref{running-hyp} (2) 
we obtain 
exactly as for classical expanding train track maps (see for instance Remark 6.2 and Lemma 6.3 of \cite{CL}) 
the following:

\begin{lem}
\label{bounded-length}
There exists a constant $C_1 \geq 0$ which only depends on $\cal G$ and 
$f$, 
such that for any INP path $\eta$ in $\cal G$ the length of $\eta$ is bounded by 
$C_1$:  
one has
$$ |\eta_t | 
\leq  |\eta | \leq C_1$$
for any integer $t \geq 1$.
\qed
\end{lem}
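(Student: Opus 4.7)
The plan is to exploit the expansion hypothesis together with the cancellation bound $C$ of Hypothesis \ref{running-hyp}(2) in order to show that an INP path of length beyond a certain universal threshold would have strictly growing iterates, contradicting the periodicity $[f^t(\eta)] = \eta$.

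First, since $f$ is expanding, fix $T \geq 1$ with $|f^T(e)| \geq 2$ for every edge $e$ of $\cal G$. Then for any legal edge path $\alpha$ with edges $e_1, \dots, e_n$, the path $f^T(\alpha)$ is reduced, so
\[
|f^T(\alpha)| = \sum_{i=1}^n |f^T(e_i)| \geq 2|\alpha|.
\]
Set $\eta^{(s)} := [f^s(\eta)]$; the relation $[f^t(\eta)] = \eta$ makes $s \mapsto \eta^{(s)}$ periodic with period dividing $t$. Each $\eta^{(s)}$ is again an illegal concatenation $\gamma^{(s)} \circ \chi^{(s)} \circ \gamma'^{(s)}$ of two legal branches with illegal tip: otherwise $\eta^{(s)}$ and all its $f$-iterates would remain reduced and legal, contradicting the fact that $\eta = [f^{t-s}(\eta^{(s)})]$ has illegal tip.

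The heart of the proof is a comparison between $|\eta^{(s+T)}|$ and $|\eta^{(s)}|$ via the reduction defect. Set $M := \max_e |f(e)|$ and
\[
\delta_j \;:=\; |f^j(\eta^{(s)})| - |\eta^{(s+j)}| \;\geq\; 0.
\]
Writing $\eta^{(s+j+1)} = [f(\eta^{(s+j)})]$ and applying Hypothesis \ref{running-hyp}(2) to the reduced path $\eta^{(s+j)}$, the fresh reduction at step $j+1$ loses at most $C$ in length; meanwhile the edges of $f^j(\eta^{(s)})$ already discarded in forming $\eta^{(s+j)}$ contribute, under one further application of $f$, a combined length of at most $M \cdot \delta_j$. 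This yields the recursion $\delta_{j+1} \leq M \delta_j + C$ with $\delta_0 = 0$, hence $\delta_T \leq C' := C \sum_{j=0}^{T-1} M^j$, a constant depending only on $f$. Since the legal branches of $\eta^{(s)}$ canonically prolong to legal edge paths, the expansion bound applied to each gives $|f^T(\eta^{(s)})| \geq 2|\eta^{(s)}|$, and therefore
\[
|\eta^{(s+T)}| \;=\; |f^T(\eta^{(s)})| - \delta_T \;\geq\; 2|\eta^{(s)}| - C'.
\]
If $|\eta| > C'$, this forces $|\eta^{(T)}| > |\eta|$, and inductively $k \mapsto |\eta^{(kT)}|$ is strictly increasing, contradicting periodicity. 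Hence $|\eta| \leq C'$, and $C_1 := C'$ does the job. The bound $|\eta_t| \leq |\eta|$ is immediate, since $\eta_t$ is a subpath of $\eta$ by definition.

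The main obstacle is establishing the defect recursion: one must verify that $\delta_j$ is controlled by constants depending only on $f$ (namely $M$, $T$, and the cancellation bound $C$) and not on the length of $\eta$. This requires a clean bookkeeping that separates the \emph{fresh} cancellation at each step, bounded by $C$ via Hypothesis \ref{running-hyp}(2), from the amplification of previously discarded edges by the factor $M$ under the next application of $f$.
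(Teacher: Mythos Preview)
Your argument is correct and is precisely the classical expanding train track computation that the paper invokes by citing Remark 6.2 and Lemma 6.3 of \cite{CL}; indeed your cancellation constant $C' = C\sum_{j=0}^{T-1} M^j$ and the resulting bound $C_1 = C'$ coincide with the explicit formulas the paper records later in STEP~3 of section~\ref{sec:algorithm} (with $\lambda_{\min}=2$). The only cosmetic wrinkle is that the two extremal edge segments of $\eta^{(s)}$ need not expand by the full factor~$2$ under $f^T$, so strictly one gets $|f^T(\eta^{(s)})| \geq 2|\eta^{(s)}| - O(1)$, but this additive defect is absorbed into the constant and does not affect the conclusion.
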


We recall that 
an INP path $\eta$ 
is in general not an edge path: 
just as pointed out in Remark \ref{sub-edge-path} for the subpaths $\eta_t$, the path $\eta$ may well start or end at a point in the interior of an edge.
In order to ease notation, we will call any ``passage'' of $\eta$ through a vertex space a {\em vertex transition} of $\eta$. This includes any connecting path that occurs as subpath of $\eta$, as well as the initial or terminal point of $\eta$, if the latter do not lie in the interior of an edge of $\cal G$. In particular,
the two {\em extremal vertex transitions} of $\eta$, by which we mean the first and the last ``passage'' of $\eta$ through a vertex space, 
will in general not coincide with 
the endpoints of $\eta$. The same terminology will be used for any of the paths $\eta_t$.

\begin{lem}
\label{vertices-into}
There exists an exponent $t_1 \geq 1$ such that for any INP path $\eta$ all vertex transitions of $\eta$ except perhaps the two extremal ones 
are contained in the interior of 
the subpath 
$\eta_{t_1}$.
\end{lem}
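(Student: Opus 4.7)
The plan is to introduce the ``limit'' subpath $\eta_\infty := \bigcup_{t \geq 1} \eta_t$ of $\eta$, show that $\eta_\infty$ already has the property claimed for $\eta_{t_1}$, and then bound uniformly in $\eta$ the exponent at which $\eta_t$ reaches $\eta_\infty$. A point $x \in \eta$ lies in $\eta_t$ precisely when $f^t(x)$ sits in the maximal backtracking subpath of $f^t(\eta)$; since $f$ sends a backtracking path $\mu \cdot \bar\mu$ to $f(\mu) \cdot \overline{f(\mu)}$, the $f$-image of the maximal backtracking of $f^t(\eta)$ is contained in that of $f^{t+1}(\eta)$. Hence $\eta_t \subseteq \eta_{t+1}$, and the chain $(\eta_t)$ stabilizes to some $\eta_\infty \subseteq \eta$.

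Next I would prove that every interior vertex transition of $\eta$ lies strictly inside $\eta_\infty$. Writing $\eta_\infty = \gamma_\infty \circ \chi \circ \gamma'_\infty$, it suffices to show that the complementary initial sub-path $\gamma \setminus \gamma_\infty$ contains no complete edge of $\cal G$ (and similarly on the $\gamma'$-side). Since $\eta$ is an INP there is $t^* \geq 1$ with $[f^{t^*}(\eta)] = \eta$, hence $[f^{kt^*}(\eta)] = \eta$ for every $k \geq 1$. Choosing $k$ large enough that $\eta_{kt^*} = \eta_\infty$, the image $f^{kt^*}(\gamma \setminus \gamma_\infty)$ is an initial sub-path of $[f^{kt^*}(\eta)] = \eta$ and hence has length at most $|\eta| \leq C_1$ by Lemma \ref{bounded-length}. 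But since $f$ is expanding, $|f^{kt^*}(e')|$ grows without bound in $k$ for any complete edge $e'$, forcing $\gamma \setminus \gamma_\infty$ to contain no complete edge. Consequently $\gamma \setminus \gamma_\infty$ sits inside the first edge of the canonical vertex-prolongation of $\gamma$; the endpoint of $\gamma_\infty$ on the $\gamma$-side lies strictly in the interior of that edge (never at a vertex), and every interior vertex transition of $\gamma$ sits strictly in the interior of $\gamma_\infty$. The $\gamma'$-side is symmetric, and $\chi$ is always interior to $\eta_\infty$.

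Finally, for the uniform exponent I would argue that $\eta_t = \eta_\infty$ is reached at a $t_1$ depending only on $C_1$, the constant $t_0$ of Lemma \ref{crucial-fin}(5), and the finite sets $\cal V(f^t)$ from Lemma \ref{first-finiteness}. The key observation is that whenever $\eta_t \subsetneq \eta_\infty$, the tip turn of $[f^t(\eta)]$ cannot be legal -- otherwise all further iterates $f^s([f^t(\eta)])$ would stay reduced and no additional cancellation could occur, contradicting $\eta_t \neq \eta_\infty$ -- so it is illegal, and by Lemma \ref{crucial-fin}(5), $f^{t_0}$ of it is degenerate, producing strict growth $\eta_{t+t_0} \supsetneq \eta_t$. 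Combined with $|\eta_t| \leq C_1$, only finitely many such strict growth events can occur. The main obstacle is to bound the number of these events uniformly over INPs: a single growth event need not absorb a complete edge of $\gamma$ or $\gamma'$ but may cancel only a sub-segment inside an edge. This bookkeeping is controlled by reading each $\eta_t$ (via canonical vertex-prolongation of its two legal branches) as an element of $\cal V(f^t)$, and exploiting both the finiteness of $\cal V(f^t)$ and the monotonicity $\cal V(f^t) \subseteq \cal V(f^{t+1})$ recorded in Remark \ref{V-sets}(4) to rule out unboundedly many distinct configurations in the chain $\eta_1 \subseteq \eta_2 \subseteq \ldots$. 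This pins down the required uniform $t_1$ and completes the argument.
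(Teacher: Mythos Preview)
Your proof has a genuine gap: for an INP path $\eta$ the chain $(\eta_t)_{t\geq 1}$ does \emph{not} stabilize. Since $[f^{t^*}(\eta)]=\eta$, the endpoints of $\eta$ survive as the endpoints of every $[f^t(\eta)]$ and hence never fall into the backtracking subpath; thus each $\eta_t$ is a proper subpath of $\eta$. On the other hand the self-similarity forces the closure of $\bigcup_t \eta_t$ to be all of $\eta$ (this is precisely what is recorded in the proof of Lemma~\ref{vanishing}\,(1)). Consequently there is no finite $k$ with $\eta_{kt^*}=\eta_\infty$, which is the hypothesis you invoke in Step~2, and your Step~3 is devoted to establishing a statement that is simply false. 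The difficulty you yourself flag --- that a growth event may move the endpoint of $\eta_t$ only within a single edge --- is exactly what happens infinitely often here; the bound $|\eta_t|\leq C_1$ counts edges, not metric length, and is therefore powerless to limit such events. The appeal to the finiteness of the sets $\cal V(f^t)$ cannot repair this, since the canonical vertex-prolongation of $\eta_t$ can (and does) stay constant while $\eta_t$ itself keeps growing inside its terminal edges.

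The paper's argument bypasses stabilization entirely. One only needs $\eta_{t_1}$ to have absorbed every \emph{full} edge of $\eta$, and for that it suffices to choose $t_1$ so large that $|f^{t_1}(e)|>C_1$ for every edge $e$ of $\cal G$. If some non-extremal vertex transition $v$ were not interior to $\eta_{t_1}$, the full edge of $\eta$ just beyond $v$ would lie entirely outside $\eta_{t_1}$ and hence map into $[f^{t_1}(\eta)]$, giving $|[f^{t_1}(\eta)]|>C_1$; but $[f^{t_1}(\eta)]$ is again an INP path, so Lemma~\ref{bounded-length} forces $|[f^{t_1}(\eta)]|\leq C_1$. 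This $t_1$ depends only on $f$ and $C_1$, so uniformity comes for free. Your Step~2 already contains the kernel of this computation; the problem is that routing it through a non-existent stabilization point both obscures it and sends you into an unworkable Step~3.
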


\begin{proof}
This is a direct consequence of Lemma \ref{bounded-length}, since $f$ is assumed to be expanding.
In fact, it suffices to take $t_1$ big enough so that $|f^{t_1}(e)| > C_1$ for any edge $e$ of $\cal G$.
\end{proof}

A simplified version of the 
following is 
again 
well known 
for classical train track maps:

\begin{lem}
\label{vanishing}
(1)
For any point $x$ 
on an INP path $\eta$ which is not 
an endpoint of $\eta$ 
there is an exponent $t \geq 1$ such that 
$x$ is contained in the interior of the subpath $\eta_t$
(or equivalently: 
such that 
$f^{t}(x)$ is contained in the 
maximal 
backtracking path of $f^t(\eta)$ and is distinct from the tip of $[f^{t}(x)]$).  
Let $t(x) \geq 1$ be the smallest such exponent $t$.

\smallskip
\noindent
(2)
Assume that the point $x$ on $\eta$ is a boundary point of an edge $e$ of $\cal G$, and that $e$ is contained in the subpath that connects $x$ to
the tip of $\eta$.
Then there is a unique  point 
$x'$ on the other legal branch of $\eta$ than $x$ which satisfies:
\begin{enumerate}
\item[(a)]
$f^{t(x)}(x') = f^{t(x)}(x)$,
\item[(b)]
$x'$ is a point on some edge $e'$ of $\cal G$, and
\item[(c)]
some non-trivial segment of $e'$ is contained in the subpath that connects $x'$ to the tip of $\eta$.
\end{enumerate}
Furthermore, both $x'$ and $x$ are contained in the interior of the $f^{t(x)}$-backtracking subpath $\eta_{t(x)}$ of~$\eta$.
\end{lem}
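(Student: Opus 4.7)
The plan is to prove part (1) using the length bound from Lemma \ref{bounded-length} together with the expanding property of $f$, and then derive part (2) from the doubling-back structure of the backtracking subpath $\check\eta_{t(x)}$.

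For part (1), write $\eta=\alpha_t\circ\eta_t\circ\alpha'_t$, where $\alpha_t$ is the initial subpath of $\gamma$ from the start-point $x_0$ of $\eta$ to the inner endpoint $p_t$ of $\eta_t$, and $\alpha'_t$ is the symmetric piece on the $\gamma'$-side. After reduction, the image $[f^t(\eta)]$ is obtained from $f^t(\eta)$ by removing the backtracking $\check\eta_t$, so the two surviving pieces $f^t(\alpha_t)$ and $f^t(\alpha'_t)$ together have length at most $C_1$ by Lemma \ref{bounded-length}; in particular $|f^t(\alpha_t)|\leq C_1$. Since $f$ is expanding, $|f^t(e)|\to\infty$ for every edge $e$ of $\cal G$, which forces $\alpha_t$ to be contained in the first edge $e_0$ of $\gamma$ for all large $t$. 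The remaining key step is that $p_t$ converges to $x_0$: using that $x_0$ is fixed by $f^{s_0}$ (from $[f^{s_0}(\eta)]=\eta$) together with the linear parameterization of $f^t$ on $e_0$, the distance of $p_t$ from $x_0$ along $e_0$ is bounded above by $C_1/|f^t(e_0)|\to 0$. Hence for any $x\in\gamma$ distinct from $x_0$, the point $p_t$ lies strictly between $x_0$ and $x$ for all large $t$, so $x$ lies in the interior of $\eta_t$. The case $x\in\gamma'$ is symmetric and the case $x\in\chi$ is immediate. Let $t(x)$ be the smallest such exponent.

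For part (2), the image $\check\eta_{t(x)}=f^{t(x)}(\eta_{t(x)})$ is a backtracking path and hence factors as $\beta\circ\bar\beta$, with $\beta$ running from the tip of $[f^{t(x)}(\eta)]$ to the tip of $f^{t(x)}(\eta)$. Since $x$ lies in the interior of $\gamma_{t(x)}$, the image $f^{t(x)}(x)$ lies strictly in the interior of $\beta$, and therefore reappears as a point of $\bar\beta$, which is traversed by $f^{t(x)}(\gamma'_{t(x)})$. The preimage of this second occurrence under $f^{t(x)}|_{\gamma'}$ gives the required $x'\in\gamma'_{t(x)}$, automatically lying in the interior of $\eta_{t(x)}$. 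Uniqueness of $x'$ on $\gamma'$ follows from the legality of $\gamma'$, since $f^{t(x)}$ restricted to a legal path is injective (its image is a reduced path).

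Because $x$ is a boundary point of the edge $e$, it lies at a vertex transition of $\eta$; a graph-of-spaces morphism sends vertex transitions to vertex transitions and edge interiors to edge interiors, so the equality $f^{t(x)}(x')=f^{t(x)}(x)$ at a vertex transition forces $x'$ to sit at a vertex transition of $\gamma'$. Taking $e'$ to be the edge of $\gamma'$ adjacent to $x'$ on its tip-side then gives both (b) and (c). The step I expect to require the most care is the convergence $p_t\to x_0$ in part (1), where the behaviour of $f^t$ on the interior of $e_0$ must be controlled via the PL parameterization, using crucially that $x_0$ is fixed by $f^{s_0}$ and that $|f^t(e_0)|\to\infty$.
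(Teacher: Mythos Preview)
Your argument for part~(1) is essentially correct and just a more explicit version of the paper's one-line observation that $\eta$ equals the closure of $\bigcup_t \eta_t$.

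Part~(2), however, has a genuine gap. Your key step reads: ``a graph-of-spaces morphism sends vertex transitions to vertex transitions and edge interiors to edge interiors, so the equality $f^{t(x)}(x')=f^{t(x)}(x)$ at a vertex transition forces $x'$ to sit at a vertex transition of $\gamma'$.'' The second clause is false: $f$ maps an edge $e$ to an edge \emph{path} $f(e)$, so interior points of $e$ can and do land in vertex spaces. Hence knowing that $f^{t(x)}(x)$ lies in a vertex space $X_v$ does \emph{not} force $x'$ to lie at a vertex transition; $x'$ may perfectly well sit in the interior of some edge $e'$ whose $f^{t(x)}$-image passes through $X_v$. Your derivation of (b) and (c) therefore does not go through as written.

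More importantly, even when $x'$ \emph{does} happen to lie at a vertex transition of $\gamma'$, your uniqueness argument (``$f^{t(x)}$ restricted to a legal path is injective'') is too coarse. In the graph-of-spaces setting a single passage of $\gamma'$ through a vertex space $X_{v'}$ involves two distinct boundary points of edges: the terminal point of the edge $e'$ entering $X_{v'}$ and the initial point of the edge $e''$ leaving it. Both satisfy (a) and (b), and injectivity ``up to homotopy in vertex spaces'' cannot distinguish them. The paper handles this by an explicit case split: if $x'$ lies in the interior of an edge the equality in (a) is exact and uniqueness is immediate; if $x'$ lies in $X_{v'}$ one checks that only the terminal point of $e'$ (the edge on the tip-side) satisfies condition~(c), while the initial point of $e''$ fails it. This is precisely the reason condition~(c) is part of the statement, and your proof does not engage with it.
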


\begin{proof}
Just as for classical expanding train track maps,
statement (1) is equivalent to the observation that $\eta$ is equal to the closure of the union of all $\eta_t$, which is an easy consequence of the definition of an INP path, as $f$ is assumed to be expanding.

\smallskip

For statement (2) we recall for any $t \geq 1$ the decomposition $\eta_t = \gamma_t \circ\chi_t \circ \gamma'_t$ with legal branches $\gamma_t$ and $\gamma'_t$, so that the image paths $f^t(\gamma_t)$ and $f^t(\gamma'_t)$ are identical edge paths, up to (as usual) homotopy within vertex spaces. It follows that for any point $x$ on $\eta$ there exists a point $x'$ which lies on the other legal branch of $\eta$ than $x$, and satisfies $f^t(x) = f^t(x')$, but a priori only up to homotopy within some vertex space of $\cal G$.
In order to simplify notation we will from now on assume that $x$ lies on $\gamma$ and $x'$ on $\gamma'$.

The point 
$x$ is assumed to be the boundary point of some edge $e$ in $\cal G$, 
and 
$f^{t(x)}(e)$ is an edge path and thus starts and ends in an edge. 
It follows that $f^{t(x)}(x)$ is the boundary point of an edge, 
and is hence 
contained in some vertex space $X_v$. 
If the above point $x'$ is contained in the interior of an edge, then one has $f^{t(x)}(x') = f^{t(x)}(x)$, and this equality determines $x'$ uniquely, since within each legal branch there is no 
non-zero backtracking path.
Clearly the above conditions (b) and (c) are also satisfied by $x'$.

If the above point $x'$ is not contained in the interior of an edge, 
it could be a priori any point in a vertex space $X_{v'}$ 
traversed by $\gamma'$, such that $X_{v'}$ 
is mapped by $f^{t(x)}$ to $X_v$.
However, since vertex spaces are mapped by $f$ to vertex spaces, there must be an edge $e'$ on $\gamma'$ with terminal  endpoint in $X_{v'}$, such that the edge paths $f^{t(x)}(e)$ and $f^{t(x)}(\bar e')$ have the same initial edge.
Hence setting $x'$ to be the terminal endpoint of $e'$ satisfies conditions (a) - (c). The only other point in $X_{v'}$ which also satisfies 
the 
conditions (a) and (b) is the initial point 
$x''$ 
of the edge $e''$ which succeeds $e'$ on the path $\gamma'$, but this point 
$x''$ 
does not satisfy condition (c), as $e''$ 
meets 
the subpath of $\eta$ bounded by $x$ and $x''$ only in 
the 
point $x''$.

The last sentence in the statement (2) is a direct consequence of the equivalence spelled out in the parenthesis in statement (1), since by assumption $f^{t(x)}(x)$ is contained in the backtracking path of $f^{t(x)}(\eta)$ and is distinct from the tip of $[f^{{t(x)}}(x)]$.
\end{proof}

\begin{rem}
\label{purity}
In the above proof we have shown that the following more general 
version of 
Lemma \ref{vanishing} (2) 
also holds:

\smallskip
{\em
For any point $x$ on 
an INP path 
$\eta$ 
there is a point 
$x' \in \eta$, 
which lies on the other legal branch of $\eta$ than $x$, 
such that, up to homotopy within  vertex spaces, one has $f^{t(x)}(x') = f^{t(x)}(x)$.
Both, $x'$ and $x$ are contained in the interior of the $f^{t(x)}$-backtracking subpath $\eta_{t(x)}$ of $\eta$.}

\smallskip
\noindent
The reason for imposing in Lemma \ref{vanishing} (2) the extra conditions 
(a) - (c) 
on $x$ and $x'$ is the following: These points bound a subpath $\beta$ of $\eta_{t(x)}$, which plays a vital role in the rest of this section. 
The conditions (a) - (c) ensure that 
the term ``subpath'' is justified for $\beta$, as 
these 
conditions guarantee that $\beta$ 
starts and ends with non-trivial edge segments, so that $\beta$ 
is indeed a ``path'' in the strict meaning 
of Convention \ref{path}. 
Without this rigid restriction and our stubborn insistence on it in the previous sections, the finiteness arguments needed 
below would be fudged.
\end{rem}

If for some path $\gamma$ in $\cal G$ 
a subpath $\gamma_0$ of $\gamma$ is mapped by $f$ to a backtracking subpath of $f(\gamma)$, 
then the same is true for any other path $\gamma'$ which contains $\gamma_0$ as subpath. In Lemma \ref{vanishing} (2), however, the subpath 
$\beta$ 
of $\eta$ bounded by $x$ and $x'$ is not just contained in the $f^{t(x)}$-backtracking subpath $\eta_{t(x)}$ of $\eta$, but it is required to lie in the {\em interior} of $\eta_{t(x)}$. For any classical train track map, i.e. if all vertex groups of $\cal G$ are trivial, this subtle difference is immaterial. In our context, however, we need to consider this question in detail.

\begin{lem}
\label{in-detail}
Let $\eta, x$ and $x'$ be as in Lemma \ref{vanishing} (2). Then there exists an exponent $\hat t(x) \geq 1$, which only depends on the subpath $\beta$ of $\eta$ that is bounded by $x$ and $x'$ 
- but not on the path $\eta$ itself,  such that $\beta$ (and hence $x$ and $x'$) are contained in the the interior of the $f^{\hat t(x)}$-backtracking subpath $\eta_{\hat t(x)}$ of $\eta$.
\end{lem}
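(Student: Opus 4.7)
The plan is to replace the $\eta$-dependent exponent $t(x)$ of Lemma \ref{vanishing}(2) by a uniform exponent depending only on $\beta$ (together with the fixed data $\cal G$, $f$). Write $\beta = \bar\gamma_\beta \circ \chi \circ \gamma'_\beta$, where $\gamma_\beta,\gamma'_\beta$ are the subpaths of the two legal branches of $\eta$ running from the tip out to $x$ and $x'$; these are determined by $\beta$. Let $s(\beta) \geq 1$ be the smallest exponent with $f^{s(\beta)}(\gamma_\beta) = f^{s(\beta)}(\gamma'_\beta)$. This is a $\beta$-intrinsic quantity, and the inclusion $\beta \subseteq \eta_{t(x)}$ implicit in Lemma \ref{vanishing}(2) yields $s(\beta) \leq t(x)$. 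For any $t \geq s(\beta)$ the path $\beta$ is already a subset of $\eta_t$, so the remaining task is to force $x$ and $x'$ into the \emph{interior} of $\eta_t$, which is equivalent to making the $f^t$-backtracking of $f^t(\eta)$ extend strictly past $f^t(x) = f^t(x')$ on both sides.

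Denote by $v_2, v'_2$ the vertices of $\cal G_X$ with $x \in X_{v_2}$ and $x' \in X_{v'_2}$. Let $e_1, e'_1$ be the first edges of $\gamma, \gamma'$ past $x, x'$ (away from the tip), and let $\chi_x, \chi_{x'}$ be the corresponding connecting paths. The interior condition at exponent $t$ is equivalent to the conjunction of: (a) $(Df)^t(e_1) = (Df)^t(e'_1)$ as oriented edges; (b) $f^t(\chi_x)$ is homotopic rel endpoints to $f^t(\chi_{x'})$ in $X_{f^t(v_2)}$. Both conditions have the \emph{persistence} property: applying $Df$ propagates (a), and applying $f$ propagates (b), from exponent $t$ to $t+1$. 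By Remark \ref{D-one}(2) the map $Df$ on the finite edge set of $\cal G$ is eventually periodic, and hence the maximum, over all edge pairs $(e_1, e'_1)$ for which equality in (a) ever occurs, of the first such exponent is a finite constant $\hat T_0$ depending only on $\cal G$. The injectivity of $f$ on each vertex group from Hypothesis \ref{running-hyp}(1) gives moreover \emph{two-sided} persistence for (b), provided $t \geq T_2(\beta) := \min\{t \geq 1 : f^t(v_2) = f^t(v'_2)\}$; this minimum is finite because the existence of $\eta$ already forces $f^{t(x)}(v_2) = f^{t(x)}(v'_2)$, and depends only on $\beta$.

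Taking $\hat t(x) := \max\bigl(s(\beta),\,T_2(\beta),\,\hat T_0,\,t_0\bigr)$, with $t_0$ from Lemma \ref{crucial-fin}(5) (which ensures $f^t(\chi)$ at the tip is contractible), produces an exponent depending only on $\beta$. At $t = \hat t(x)$: the inequality $t \geq \hat T_0$ gives (a); the two-sided persistence for (b), combined with the validity of (b) at $t(x) \geq T_2(\beta)$ supplied by Lemma \ref{vanishing}(2), gives (b) at $T_2(\beta)$ and hence at every larger $t$; and $t \geq s(\beta)$ gives $\beta \subseteq \eta_t$. Therefore $\beta$ lies in the interior of $\eta_{\hat t(x)}$. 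The principal obstacle is the possibly infinite vertex-group data appearing in (b); it is resolved exactly by combining injectivity on vertex groups with two-sided persistence, which converts the quantitative question ``for which $t$ does the equation hold?'' into the qualitative fact ``it does hold, because the $\eta$ of the statement exists''.
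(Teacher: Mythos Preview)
Your argument has a real gap: you assume that $x'$ lies in a vertex space $X_{v'_2}$, but Lemma~\ref{vanishing}(2) explicitly allows $x'$ to sit in the \emph{interior} of an edge $e'$ (with only a non-trivial segment of $e'$ on the tip side). In that situation there is no vertex $v'_2$, no connecting path $\chi_{x'}$, and no ``first edge $e'_1$ past $x'$'' in your sense; past $x'$ the branch $\gamma'$ begins with the remaining segment of $e'$. Consequently your conditions (a) and (b), as well as the quantity $T_2(\beta)$, are simply undefined, and your two-sided persistence argument for (b) has nothing to act on. This case genuinely occurs in the applications (in the proof of Proposition~\ref{INP-finite} the points $x'_1,x'_2$ may well be of this type), so it cannot be dismissed.

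Even in the case where $x'$ is at a vertex, your approach is more delicate than necessary: you are effectively re-proving a special instance of Lemma~\ref{crucial-fin}(5) by separately controlling the $Df$-orbit of the edge pair and the vertex-group element. The paper's proof sidesteps the whole case distinction by observing that after reducing the backtracking subpath $f^t(\beta)$ inside $f^t(\eta')$ one obtains a path $\gamma_0\circ\chi_0\circ\gamma'_0$ whose turn at $\chi_0$ is illegal (this uses only that $\eta'$ is an INP path and that $x$ is not its endpoint, regardless of where $x'$ sits). Then Lemma~\ref{crucial-fin}(5) gives a single universal exponent $t_0$ after which this turn degenerates, forcing $\beta$ into the interior of $\eta'_{t+t_0}$. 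This packages both of your conditions (a) and (b) --- and also the interior-of-edge case for $x'$ --- into one uniform step.
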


\begin{proof}
We first consider 
the exponent 
$t := t(x)$ as in Lemma \ref{vanishing} (1). Thus we have $f^t(x) = f^t(x')$, and $\beta$ is contained in the $f^t$-backtracking subpath $\eta_t$ of $\eta$. 
For any second INP path $\eta'$, which also contains $\beta$ as subpath, this subpath must also be contained in the $f^t$-backtracking subpath $\eta'_t$ of $\eta'$.

If the point $f^t(x) = f^t(x')$ is 
contained in the interior of an edge, 
then the boundary points of $\beta$ must both be distinct from the boundary points of $\eta'_t$, so that in this case we can set $\hat t(x) = t$.

If $f^t(x) = f^t(x')$ is 
contained in a vertex space, 
then we consider the path $f^t(\eta')_0$, obtained from $f^t(\eta')$ by reducing the backtracking path 
$f^t(\beta)$: 
we have $f^t(\eta')_0 = : \gamma_0 \circ \chi_0 \circ \gamma'_0$, where the legal paths $\gamma_0$ and $\gamma'_0$ are 
a 
non-zero 
initial and 
a non-zero 
terminal subpath of the 
$f^t$-images of the 
two maximal legal subpaths of $\eta'$, and $\chi_0$ is a connecting path in the vertex space which contains $f^t(x) = f^t(x')$.

In this case the path 
$f^t(\eta')_0$ 
defines a (possibly degenerate) illegal turn $e \circ \chi_0 \circ e'$ at $\chi_0$, where $e$ is the last edge of $\gamma_0$ and $e'$ is the first edge of $\gamma'_0$ (or 
of their canonical vertex-prolongations). 
Hence we can apply Lemma \ref{crucial-fin} (5)
to obtain an exponent $t_0 \geq 0$ which only depends on $f$ and $\cal G$ and not on the particular choice of $e, e'$ and $\chi_0$, such that the $f^{t_0}$-image turn of $e \circ \chi_0 \circ e'$ is degenerate. It follows that for any choice of $\eta'$ as above the subpath $\beta$ must be contained in the $f^{t+t_0}$-backtracking subpath of $\eta'$.

Hence setting $\hat t(x) = t + t_0$ satisfies our claim in both cases.
\end{proof}

For the following result it is important to recall that in this paper an INP path in $\cal G$ is always a non-zero path as in Convention \ref{path}. In particular, it starts and ends always in a non-degenerate segment of a graph-of-spaces edge from $\cal G$ (but its endpoints may well lie in the interior of such an edge). 

\begin{prop}
\label{INP-finite}
Let $\cal G$ and $f: \cal G \to \cal G$ be as in Convention \ref{hyp-sec-4}.
Then there 
are only finitely many INP paths in $\cal G$.
Indeed, there exists an integer $\hat t \geq 1$ such that every INP path is a subpath of 
some edge path in the finite set $\cal V(f^{\hat t})$.
\end{prop}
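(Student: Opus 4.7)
Our strategy is to show that every INP path $\eta$ embeds as a subpath of some element of the finite set $\cal V(f^{\hat t})$ from Lemma \ref{first-finiteness}, for a uniformly chosen exponent $\hat t \geq 1$. Together with the length bound $|\eta| \leq C_1$ from Lemma \ref{bounded-length}, this immediately yields that there are only finitely many INP paths in $\cal G$.

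Given an INP $\eta = \gamma \circ \chi \circ \gamma'$, I would first pass to its canonical vertex-prolongation $\hat\eta = \bar\gamma_1 \circ \chi \circ \gamma_2$, which is an edge path of length $|\hat\eta| \leq C_1$. The uniform exponent $\hat t$ is then chosen in two stages. First, I would take $\hat t \geq t_0$ using Lemma \ref{crucial-fin}(5), so that every illegal turn of $\cal G$ (and in particular the tip turn of every INP) becomes degenerate under $f^{\hat t}$, ensuring that $f^{\hat t}(\chi)$ is contractible. Second, I would enlarge $\hat t$ to dominate all exponents $\hat t(\beta)$ produced by Lemma \ref{in-detail} for every possible subpath $\beta$ bounded by a point $x$ and its matching point $x'$ as in Lemma \ref{vanishing}(2). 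Because $|\beta| \leq C_1$, and because by Lemma \ref{crucial-fin}(3) every turn has only finitely many preimage turns, only finitely many combinatorial configurations of such $\beta$ are possible, so this second enlargement remains bounded.

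With this uniform $\hat t$ in hand, I would verify that $\hat\eta$ satisfies the defining conditions of $\cal V(f^{\hat t})$. The paths $\gamma_1, \gamma_2$ are legal edge paths originating in the tip vertex space $X_v$, and $f^{\hat t}(\chi)$ is contractible. The crucial step is showing that the maximal common initial subpaths $\gamma'_i \subseteq \gamma_i$ with $f^{\hat t}(\gamma'_1) = f^{\hat t}(\gamma'_2)$ overlap non-trivially with the last edges of the $\gamma_i$: this follows because the outer boundary vertex of each last edge lies in the interior of $\eta_{\hat t}$ by Lemma \ref{in-detail}, and must therefore belong to the cancellation region. Once $\gamma'_i$ reaches into the last edge, it automatically contains every intermediate connecting path of $\gamma_i$.

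The main obstacle I expect is twofold. First, one must ensure that $\hat t$ can be chosen independently of the INP, which hinges on enumerating the subpaths $\beta$ in finite number despite the potentially infinitely many homotopy classes of connecting paths in essential vertex spaces; Lemma \ref{crucial-fin}(3) is the key ingredient here. Second, one must handle the exceptional case in which $\gamma_1$ and $\gamma_2$ share a common first edge at the tip, which can occur only when the vertex group at $v$ is non-trivial and the illegal tip turn has the form $\bar e \circ \chi \circ e$ with $\chi$ not null-homotopic. In this situation $\hat\eta$ does not literally meet the distinct-first-edges requirement of $\cal V(f^{\hat t})$, and one must either slightly broaden that condition or treat this case directly, exploiting the strong constraint that such INPs are determined, within the length bound $C_1$, by the edge $e$ and the homotopy class of $\chi$.
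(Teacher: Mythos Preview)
Your overall strategy aligns with the paper's: show that the canonical vertex-prolongation $\hat\eta$ of every INP lies in $\cal V(f^{\hat t})$ for a uniform $\hat t$, then deduce finiteness. However, there is a genuine gap in your argument for the uniform bound on~$\hat t$.

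You claim that the exponents $\hat t(\beta)$ from Lemma \ref{in-detail} can be uniformly bounded because ``only finitely many combinatorial configurations of such $\beta$ are possible'', justified by $|\beta|\leq C_1$ together with Lemma \ref{crucial-fin}(3). This does not follow. When vertex groups are infinite there are infinitely many legal turns, and hence infinitely many edge paths of length $\leq C_1$; Lemma \ref{crucial-fin}(3) bounds only the number of preimage turns of a \emph{fixed} turn and says nothing about the totality of paths of bounded length. So you have not established that the relevant $\beta$'s form a finite set, and without that the supremum of the $\hat t(\beta)$ could a priori be infinite.

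The paper circumvents this by a two-step bootstrap rather than a single uniform bound. The uniform exponent $t_1$ of Lemma \ref{vertices-into} first ensures that the sub-edge-path $\eta'$ of $\eta$ bounded by its two extremal vertex transitions already lies in the finite set $\cal V(f^{t_1})$. The subpath $\beta_1$ (bounded by one extremal vertex transition $x_1$ and its partner $x'_1$) is then a subpath of some $\eta'\in\cal V(f^{t_1})$, so there are only finitely many possible $\beta_1$, hence finitely many values $t_2 := \hat t(x_1)$. One repeats: the vertex-prolongation $\hat\beta_2$ lies in the finite set $\cal V(f^{t_2})$, yielding finitely many $t_3 := \hat t(x_2)$, and $\hat t$ is the maximum of these. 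The point is that finiteness of the $\beta$'s is \emph{derived from} the finiteness of the sets $\cal V(f^{t_1})$ and $\cal V(f^{t_2})$, not assumed independently.

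Two smaller remarks. Your worry about the ``distinct first edges'' condition is unfounded: if the two branches at an illegal turn shared a first edge $e$, the connecting path $\chi$ would be a loop at the initial point of $e$, and by Hypothesis \ref{running-hyp}(1) the image $f^t(\chi)$ could never become contractible, so the turn would be legal --- a contradiction. Finally, knowing that every INP is a subpath of some element of the finite set $\cal V(f^{\hat t})$ does not by itself yield finitely many INPs, since INP endpoints may lie in the interior of edges; the paper closes this with the observation (via Lemma \ref{vanishing}(1)) that no path with a single illegal turn contains two distinct INP subpaths.
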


\begin{proof}
For $t_1$ as in Lemma \ref{vertices-into} we apply Lemma \ref{first-finiteness} to conclude that the 
edge path 
set $\cal V(f^{t_1})$ is finite. 
From the definition of $t_1$ in Lemma \ref{vertices-into} it follows that for any INP path $\eta$ the subpath $\eta_{t_1}$ contains all 
vertex transitions of $\eta$ except possibly the two extremal ones, and that $\eta_{t_1}$ doesn't start or end in any of the two 
vertex transitions 
on $\eta$ that are next to the extremal ones. It follows that the canonical vertex-prolongation $\eta'$ of $\eta_{t_1}$, which is
the subpath of $\eta$ that is bounded by the two extremal 
vertex transitions 
of $\eta$, is contained in $\cal V(f^{t_1})$.

For 
any INP path $\eta$ 
and its sub-edge-path $\eta'$ in $\cal V(f^{t_1})$ as above 
we consider the 
initial and terminal endpoints 
$x_1$ and $x_2$ 
of $\eta'$, 
which lie in the extremal vertex transitions of 
$\eta$. 
Let $x'_1$ and $x'_2$ be the corresponding points on $\eta$ as provided by Lemma \ref{vanishing} (2), and let $\beta_1$ and $\beta_2$ be the subpaths of $\eta$ bounded by $x_1$ and $x'_1$, or by $x_2$ and $x'_2$ respectively. Without loss of generality we can assume that $\beta_1$ is a subpath of $\beta_2$, and while $\beta_1$ is necessarily a subpath of $\eta'$, we observe that the analogue statement for $\beta_2$ fails unless one has $\beta_1 = \beta_2$ (and hence $x_1 = x'_2$ and $x_2 = x'_1$).

In case that $x_1$ is distinct from the endpoint $x'_2$ of $\beta_2$, we apply Lemma \ref{in-detail} to obtain an exponent $t_2 : = \hat t(x_1) \geq 1$. Otherwise we pose $t_2 = t_1$, and notice that in either case the canonical vertex-prolongation $\hat \beta_2$ 
of $\beta_2$ is contained in $\cal V(f^{t_2})$ (which is also finite, again by Lemma \ref{first-finiteness}). 

We now 
``repeat'' 
the same procedure with $x_2$: 
If $x_2$ is not an endpoint of $\eta$, we apply Lemma \ref{in-detail} to obtain an exponent $t_3 : = \hat t(x_2) \geq 1$. Otherwise we pose $t_3 = t_2$ and observe that 
now the canonical vertex-prolongation $\hat \eta$ 
of $\eta$ is contained in $\cal V(f^{t_3})$.

We recall that the edge path $\beta_1$ is a subpath of one of the edge paths $\eta'$ from the finite set $\cal V(f^{t_1})$. The exponent $t_2$ only depends on one of the endpoints of $\eta'$, and  
the edge path $\beta_2$ is a subpath of one of the edge paths $\hat \beta_2$ from the finite set $\cal V(f^{t_2})$. Finally, the exponent $t_3$ only depends on one of the endpoints of $\hat \beta_2$, so that 
as consequence we deduce that 
the total set of any such $t_3$ is finite. It follows that among them there exists a maximal exponent $\hat t \geq 1$, 
and that every INP path $\eta$ is a subpath of some edge path $\hat \eta$ from $\cal V(f^{\hat t})$, which (again by Lemma \ref{first-finiteness}) is also 
a finite 
set. 

We now apply 
Lemma \ref{vanishing} 
(1) to 
deduce 
that no path with a single illegal turn can contain two distinct INP paths as subpaths, and thus obtain the desired finiteness of all INP paths in $\cal G$.
\end{proof}

\begin{rem}
\label{Collins-Turner}
In the context of generalizing Bestvina-Handel's work on relative train track representatives from free groups to free products, Collins and Turner considered in \cite{CT} already indivisible Nielsen paths in ``graphs of complexes'' and ``topological maps'', which seem close to what is called here ``graph-of-space morphisms''. In Proposition 2.4 of \cite{CL} a finiteness statement for ``real ended INPs'' is proved. However, unless we misunderstand their definitions, their result does not concern INP paths that are periodic (up to cancellation at the tip) but not fixed, and the main difficulty, that such periodic INP paths do in general not start at a vertex, seems to be circumvented a priori by their definition of a ``real ended INP''.
\end{rem}

\section{INP candidates in $\cal G$}
\label{sec:INP-candidates}

In order to prove statements (2) and (3) from Theorem \ref{main} we need to reconsider some of the material from the previous section, but for slightly more general than just INP paths: In this section we denote by $\eta = \gamma \circ \chi \circ \gamma'$ any concatenation of two non-zero legal paths 
(in the strict meaning of Convention \ref{path}) 
$\gamma$ and $\gamma'$, and 
we 
assume that the turn at the concatenation vertex space (which contains 
the connecting zero path $\chi$) is illegal. Such a path will be called a {\em pseudo-INP path}. The goal of this section is to show that any sufficiently large $f$-iterate of a pseudo-INP path becomes after reduction either a legal path, or 
else 
this reduced iterate 
will contain 
an honest INP path. Furthermore, there is an upper bound to the number of 
iterations 
needed 
here.

For any exponent $t \geq 1$ we define the $f^t$-backtracking subpath $\eta_t$ of $\eta$ precisely as 
done (before Remark \ref{sub-edge-path}) for an INP path, and we 
define $\eta_\infty$ as the closure of the union of all $\eta_t$. 
The canonical vertex-prolongation of $\eta_\infty$ will be denoted by $\hat \eta$.
We 
observe:

\begin{rem}
\label{pseudo-INP-1}
Let $\eta = \gamma \circ \chi \circ \gamma'$ be a pseudo-INP path in $\cal G$. Then the following holds:
\begin{enumerate}
\item
For any 
exponent $t \geq 1$ 
the reduction $[f^t(\eta)]$ of $f^t(\eta)$ is legal if and only if one has
$\eta_{t'} = \eta_{t}$ for 
all 
$t' \geq t$.

The smallest such exponent $t$ is denoted by $t(\eta)$ and is called the {\em legalizing exponent} for~$\eta$.
\item
The path $\eta$ possesses a legalizing exponent $t(\eta) \in \N$ if and only if 
one of the following occurs:
\begin{enumerate}
\item
At least 
one of $f^{t(\eta)}(\gamma)$ or $f^{t(\eta)}(\gamma')$ is completely contained in the backtracking subpath of $f^{t(\eta)}(\eta)$. This is equivalent to stating that 
at least 
one of $\gamma$ or $\gamma'$ is completely contained in the subpath $\eta_{t(\eta)}$.
\item
Neither $f^{t(\eta)}(\gamma)$ nor $f^{t(\eta)}(\gamma')$ is completely contained in the backtracking subpath of $f^{t(\eta)}(\eta)$, and the turn in $[f^{t(\eta)}(\eta)]$ at its ``tip'' (by which we mean 
the vertex space that contains the boundary points of the maximal backtracking subpath 
in 
$f^{t(\eta)}(\eta)$) is legal. This turn is the only turn on the legal path $[f^{t(\eta)}(\eta)]$ which is (possibly) not used by either of the legal paths $f^{t(\eta)}(\gamma)$ or $f^{t(\eta)}(\gamma')$.
\end{enumerate}
\end{enumerate} 
\end{rem}

We now 
observe that any 
honest INP path 
$\eta$ is in particular a pseudo-INP path, with the property that $\eta = \eta_\infty$. For our purposes it suffices 
to consider only {\em short pseudo-INP edge paths},
by which we mean any pseudo-INP path 
$\eta$ 
which is equal to the canonical vertex-prolongation 
$\hat \eta$ 
of its subpath $\eta_\infty$.

\begin{prop}
\label{short-INP}
Every short pseudo-INP edge path 
$\eta = \hat \eta$ 
is contained in the finite set $\cal V(f^{\hat t})$ from Proposition \ref{INP-finite}.
\end{prop}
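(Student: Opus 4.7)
The plan is to adapt the argument of Proposition \ref{INP-finite} to the more general setting of short pseudo-INP edge paths. Let $\eta = \hat\eta = \gamma \circ \chi \circ \gamma'$ be such a path; the shortness assumption $\eta = \hat\eta$ means that $\eta$ coincides with the canonical vertex-prolongation of $\eta_\infty$, so every interior vertex transition of $\eta$ is already a vertex transition of $\eta_\infty$, and $\eta_\infty$ differs from $\eta$ only by possibly partial initial and terminal edge segments at the two extremities. Without loss of generality the illegal turn at $\chi$ is non-degenerate (otherwise $\eta$ would not be reduced and the distinct-first-edges condition of $\cal V(f^t)$ must be handled trivially).

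First I would establish a universal length bound $|\eta_\infty| \leq C_1'$ for short pseudo-INP edge paths, analogous to Lemma \ref{bounded-length}, together with a universal upper bound $T_0$ on the legalizing exponent $t(\eta)$ from Remark \ref{pseudo-INP-1}; combined with $\eta = \hat\eta$ this yields $|\eta| \leq C_1'+2$. Second, for any $t_1 \geq T_0$, the subpath $\eta_{t_1}$ coincides with $\eta_\infty$ and hence, as in Lemma \ref{vertices-into}, contains every vertex transition of $\eta$ apart from possibly the two extremal ones; so its canonical vertex-prolongation already belongs to $\cal V(f^{t_1})$ by Lemma \ref{first-finiteness}. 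Third, at each endpoint of $\eta_\infty$ that lies in the interior of $\eta$, I would invoke the analogs of Lemma \ref{vanishing} (2) and Lemma \ref{in-detail}; these carry over verbatim, since their proofs depend only on the common structure of the paths $\eta_t$ and on Lemma \ref{crucial-fin} (5), rather than on any periodicity specific to honest INPs. Finally, taking $\hat t$ to be the maximum of all the exponents that appear (the $\hat t$ from Proposition \ref{INP-finite}, the bound $T_0$, the exponent $t_0$ from Lemma \ref{crucial-fin} (5), and the finitely many endpoint exponents $\hat t(x)$ produced here), one concludes that $\eta \in \cal V(f^{\hat t})$: the maximal initial subpaths $\bar\tau_{\hat t}$ of $\bar\gamma$ and $\tau'_{\hat t}$ of $\gamma'$ with $f^{\hat t}(\bar\tau_{\hat t}) = f^{\hat t}(\tau'_{\hat t})$ contain all connecting paths of $\gamma, \gamma'$ and overlap non-trivially with the (reversed) first edges of $\gamma, \gamma'$, fulfilling the conditions spelled out in Lemma \ref{first-finiteness}.

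The main obstacle is the first step. Unlike honest INPs, short pseudo-INP edge paths lack the periodicity property $[f^t(\eta)] = \eta$ which drives the length bound in Lemma \ref{bounded-length}. The substitute is the shortness condition $\eta = \hat\eta$, which tightly couples $\gamma$ and $\gamma'$ to the cancellation core $\eta_\infty$, and the two cases (a) and (b) of Remark \ref{pseudo-INP-1} (2) must be handled separately: in case (a), where at least one of $\gamma$ or $\gamma'$ is fully contained in $\eta_{t(\eta)}$, the expansion of $f$ together with full cancellation forces the other branch to have comparable length, and the cancellation bound $C$ of Hypothesis \ref{running-hyp} (2) applied iteratively then yields a universal length bound; in case (b), the legal turn appearing at the new tip of $[f^{t(\eta)}(\eta)]$ is the image turn of an illegal turn in $\eta$, and the finiteness statements of Lemma \ref{crucial-fin} (3)--(5) restrict the possible configurations at time $t(\eta)$, once again producing a universal bound on $|\eta_\infty|$ and on $t(\eta)$.
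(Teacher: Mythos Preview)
Your proposal has a genuine gap. The entire argument is built on the claim that there is a universal bound $T_0$ on the legalizing exponent $t(\eta)$, so that $\eta_{t_1} = \eta_\infty$ for all $t_1 \geq T_0$. But for many short pseudo-INP edge paths $t(\eta)$ does not exist at all: by Remark~\ref{pseudo-INP-1}~(1), one has $\eta_t = \eta_\infty$ for some $t$ if and only if $[f^t(\eta)]$ eventually becomes legal. Take any honest INP path $\eta'$ whose endpoints lie in the interior of edges; its canonical vertex-prolongation $\hat\eta'$ is a short pseudo-INP edge path with $(\hat\eta')_\infty = \eta'$, and $[f^t(\hat\eta')]$ contains an illegal turn for every $t$, so no legalizing exponent exists and $\eta_t \subsetneq \eta_\infty$ for all $t$. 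Your step~2 therefore fails for exactly the paths you most need to handle. Moreover, even restricted to paths where $t(\eta)$ does exist, the bound you want is essentially the content of Corollary~\ref{iteration-bound}, which is deduced \emph{from} Proposition~\ref{short-INP}; establishing it beforehand would be circular. Your case~(b) analysis also conflates the ``image turn'' of section~\ref{sec-turns} with the new turn created at the tip of $[f^{t}(\eta)]$ after cancellation; these are different objects, and the finiteness statements of Lemma~\ref{crucial-fin} do not translate into a bound on $t(\eta)$ in the way you suggest.

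The paper avoids all of this by never attempting to bound $t(\eta)$. Instead it observes that the proofs of Lemmas~\ref{bounded-length}--\ref{in-detail} and Proposition~\ref{INP-finite} go through verbatim once every occurrence of ``INP path $\eta$'' is replaced by ``the subpath $\eta_\infty$ of a pseudo-INP path''. The key point you are missing is that the length bound $|\eta_\infty| \leq C_1$ (the analog of Lemma~\ref{bounded-length}) follows from the cancellation bound and expansion alone --- any point of $\eta_\infty$ at distance greater than $C_1/2$ from the tip would move strictly further from the tip under iteration and hence could never be cancelled --- without any periodicity or any control on $t(\eta)$. Once that bound is in place, Lemma~\ref{vertices-into} yields a $t_1$ for which $\eta_{t_1}$ contains all non-extremal vertex transitions of $\eta_\infty$; this is strictly weaker than $\eta_{t_1} = \eta_\infty$, but it is exactly what is needed to feed into Lemma~\ref{first-finiteness} and the remainder of the proof of Proposition~\ref{INP-finite}.
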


\begin{proof}
The proof is already given in the last section, by Lemmas \ref{bounded-length}
-- \ref{in-detail} and by Proposition \ref{INP-finite}, if one uses the following ``translation'' in the reading of these statements: Any time an INP path $\eta$ is invoked, is has to be replaced by the subpath $\eta_\infty$ of any 
pseudo-INP path. 
With this ``translation'' the proofs of these statements, as stated in the previous section, are true word-by-word. In fact, they are written already with this more general application in mind. (To be meticulous, we should add here that the assertion at the beginning of the next-to-last paragraph in the proof of Lemma \ref{in-detail}, that the turn $e \circ \chi_0 \circ e'$ is illegal, is justified in the above ``translation'' by the assumption - inherited from Lemma \ref{vanishing} - that $x$ is not an endpoint of $\eta_\infty$). 
In particular, in the proof of Proposition \ref{INP-finite} it is shown that $\hat \eta$ belongs to $\cal V(f^{\hat t})$.
\end{proof}

For any edge path $\hat \eta$ in $\cal V(f^{\hat t})$ we now want to consider 
an ``image edge path'' by proceeding as follows: We (i) first apply $f$ to get $f(\hat \eta)$, (ii) next reduce $f(\hat \eta)$ to get $[f(\hat\eta)]$, 
then (iii) (unless $[f(\hat\eta)]$ is legal) consider the subpath $[f(\hat\eta)]_\infty$ of $[f(\hat\eta)]$, 
and (iv) finally pass to the canonical vertex-prolongation of $[f(\hat\eta)]_\infty$. The resulting edge path
$\hat{[f(\hat \eta)]}$
is again a well defined short pseudo-INP edge path and thus contained in $\cal V(f^{\hat t})$, by Proposition \ref{short-INP}.

We now define $\cal V_+$ to be obtained from $\cal V(f^{\hat t})$ by adding a formal symbol $*$, and we set $\hat f(\hat \eta) = *$ if $[f(\hat \eta)]$ is legal, and $\hat f(\hat \eta) = \hat{[f(\hat \eta)]}$ otherwise. We thus obtain a map 
$$\hat f: \cal V_+ \to \cal V_+$$
by completing the above 
settings through postulating 
$\hat f(*) = *$. 
From 
this 
definition we verify immediately that 
the map $\hat f$ 
satisfies $\hat f(\hat \eta) = \hat{f(\eta)}$ for any pseudo-INP path $\eta$.

\begin{cor}
\label{iteration-bound}
The cardinality of $\cal V(f^{\hat t})$ is an upper bound to the number of iterations of the map $f$ needed to be performed on any pseudo-INP path $\eta$, in order to obtain (after reduction) either a legal path, or a path that contains an honest INP path as subpath.
\end{cor}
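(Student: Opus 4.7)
The plan is to apply a pigeonhole argument to the self-map $\hat f$ of the finite set $\cal V_+ = \cal V(f^{\hat t}) \cup \{*\}$. Given a pseudo-INP path $\eta$, Proposition \ref{short-INP} places the associated short pseudo-INP edge path $\hat \eta$ in $\cal V(f^{\hat t})$, and the commutation identity $\hat f(\hat \zeta) = \widehat{f(\zeta)}$ valid for every pseudo-INP $\zeta$ lets me read off the successive applications of the hat-operation to the reduced iterates $[f^k(\eta)]$ directly from the $\hat f$-orbit of $\hat \eta$.

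I consider the initial $|\cal V(f^{\hat t})| + 1$ terms of this orbit, namely $\hat \eta, \hat f(\hat \eta), \ldots, \hat f^{|\cal V(f^{\hat t})|}(\hat \eta)$. Since $|\cal V_+| = |\cal V(f^{\hat t})| + 1$, either (a) some term $\hat f^k(\hat \eta)$ with $1 \leq k \leq |\cal V(f^{\hat t})|$ equals $*$, or else (b) all these terms lie in $\cal V(f^{\hat t})$, which has only $|\cal V(f^{\hat t})|$ elements, forcing two distinct iterates to coincide. In case (a), the definition of $\hat f$ at the step $k-1 \mapsto k$ records that $[f(\hat f^{k-1}(\hat \eta))]$ is legal, and propagating the commutation relation $k$ times this forces $[f^k(\eta)]$ itself to be legal, giving the desired ``legal'' conclusion after $k \leq |\cal V(f^{\hat t})|$ iterations.

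In case (b) I pick $0 \leq j < k \leq |\cal V(f^{\hat t})|$ with $\hat f^j(\hat \eta) = \hat f^k(\hat \eta) =: \eta'$ and set $s := k - j \geq 1$, so that $\hat f^s(\eta') = \eta'$. The next step is to argue that $\zeta := \eta'_\infty$ is an honest INP path. Iterating the commutation identity $s$ times gives $\widehat{[f^s(\eta')]_\infty} = \eta' = \widehat{\zeta}$, and since the $_\infty$-subpath of a pseudo-INP path is canonically recovered from its canonical vertex-prolongation (both subpaths being determined by their extremal edges up to the next vertex space), this forces $[f^s(\eta')]_\infty = \zeta$. The only backtracking occurring in $f^s(\eta')$ is located at the image of the tip and is controlled precisely by the $f^s$-backtracking subpath $\zeta_s$ of $\zeta$; from this one deduces $[f^s(\zeta)] = \zeta$ up to homotopy relative to endpoints in vertex spaces, which is the defining property of an INP path. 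Using once more $\hat f^j(\hat \eta) = \widehat{f^j(\eta)}$, one obtains $\zeta = [f^j(\eta)]_\infty$, so $\zeta$ is a subpath of $[f^j(\eta)]$ with $j \leq |\cal V(f^{\hat t})|$, as required.

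The main obstacle is the INP verification in case (b): translating the abstract periodicity $\hat f^s(\eta') = \eta'$ inside the formal set $\cal V_+$ into the concrete relation $[f^s(\zeta)] = \zeta$ rel endpoints in vertex spaces, and carefully ruling out any stray cancellation outside the tip that could perturb the endpoints of $\zeta$. Once this is in hand the pigeonhole count delivers the bound $|\cal V(f^{\hat t})|$ directly.
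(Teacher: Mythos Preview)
Your proposal is correct and follows the same approach as the paper: both arguments reduce to the self-map $\hat f$ on the finite set $\cal V_+$ and apply pigeonhole to conclude that the $\hat f$-orbit of $\hat\eta$ either reaches $*$ (giving a legal iterate) or enters a periodic cycle inside $\cal V(f^{\hat t})$ (yielding an INP subpath). The paper dispatches the periodic case in one sentence (``from the definition of an INP path it follows directly\ldots''), whereas you spell out the verification that $\eta'_\infty$ is an honest INP; your justification that $[f^s(\eta')]_\infty = \eta'_\infty$ is correct once one notes that $[f^s(\eta')]_\infty \subseteq \eta' \subseteq [f^s(\eta')]$ forces the $_\infty$-subpaths to agree, though the parenthetical reason you give (``determined by their extremal edges up to the next vertex space'') is not quite the right mechanism.
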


\begin{proof}
It suffices to consider for any pseudo-INP path $\eta$ the associated short pseudo-INP edge path $\hat \eta \in \cal V(f^{\hat t})$, 
as well as the 
above map $\hat f: \cal V_+ \to \cal V_+$. 
From the definition of an INP-path it follows directly that any path $\hat \eta$ in the set $\cal V(f^{\hat t}) \subset \cal V_+$, which we assume to have a periodic $\hat f$-orbit, must 
contain an INP-path as subpath. This proves our claim, since the only other periodic orbit in the finite set $\cal V_+$ consists of the point $*$ only, and for any path $\hat \eta$ in  $\cal V(f^{\hat t})$ which is mapped by $\hat f$ to $*$ 
the reduced $f$-image $[f(\hat \eta)]$ is legal
(by the above definition of $\hat f$).
\end{proof}

The statement of Corollary \ref{iteration-bound} is all one needs to derive the following Proposition \ref{from-LUbig}, which extends the important well known fact about the behavior of illegal turns under iteration of $f$ from classical train track maps to graph-of-groups train track maps. In fact, 
its proof uses the very same arguments as applied previously in more than one occasion for classical train track maps, see for instance \cite{LL3}, Lemma 6.1. or 
\cite{LU-big}, Propositions 4.12 and 4.18, so that we can leave the ``translation'' to the interested reader.
We recommend in particular section 4.2 of \cite{LU-big}, where a detailed exposition of these arguments can be found, in a terminology not far from the one used here. 

\begin{prop}
\label{from-LUbig}
Let $f: \cal G \to \cal G$ be an expanding train track 
map
that satisfies conditions (1) and (2) of Hypothesis \ref{running-hyp}. 
Then the following holds:
\begin{enumerate}
\item
There exists an exponent 
$t = t(f) \geq 1$ such that for any edge path $\gamma$ in $\cal G$ the number of illegal turns in the reduced path $[f^t(\gamma]$ is strictly smaller than those in $\gamma$, unless $[f^t(\gamma)]$ is a legal concatenation of legal and INP paths.
\item
For every edge path 
(or loop) 
$\gamma$ in $\cal G$ there is an integer 
$t(\gamma) \geq 0$ such that the reduced path $[f^{t(\gamma)}(\gamma)]$ derived from $f^{t(\gamma)}(\gamma)$ is a 
(cyclically) 
legal concatenation of 
legal and INP subpaths. 
An upper bound 
$\hat t(\gamma)$ 
for the exponent $t(\gamma)$ depends only on the number of illegal turns in $\gamma$ and not on the particular choice of $\gamma$ itself. 
\item
There is an exponent 
$t_4 \geq 1$ 
such that for any edge path or loop $\gamma$ one has the following inequality, where $ILT(\gamma)$ denotes the number of illegal turns in $\gamma$:
\begin{equation}
\label{expo-dec}
ILT(f^{t_4}
(\gamma)) - ILT([f^{t(\gamma)}(\gamma)]) \leq \frac{1}{2} \left( ILT(\gamma) - ILT([f^{t(\gamma)}(\gamma)]) \right)
\end{equation}
\end{enumerate}
\qed
\end{prop}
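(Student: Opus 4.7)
The plan is to reduce all three statements to Corollary \ref{iteration-bound} via the canonical decomposition of an edge path $\gamma$ into maximal legal subpaths $\gamma = \gamma_1 \circ \chi_1 \circ \gamma_2 \circ \chi_2 \circ \ldots \circ \chi_{q-1} \circ \gamma_q$, where each vertex transition at $\chi_i$ is an illegal turn. Each concatenation $\gamma_i \circ \chi_i \circ \gamma_{i+1}$ is a pseudo-INP path in the sense of Section \ref{sec:INP-candidates}, so Corollary \ref{iteration-bound} gives a uniform exponent $T := \mbox{card}(\cal V(f^{\hat t}))$ such that for every pseudo-INP path $\eta$, either $[f^T(\eta)]$ is legal or $[f^T(\eta)]$ contains an honest INP path as subpath around its tip.

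For part (1), I would set $t := T$ and analyze each illegal turn of $\gamma$ locally. Since the cancellation in $[f^T(\gamma)]$ around any given $\chi_i$ is controlled by the cancellation bound from Hypothesis \ref{running-hyp} (2), it depends only on the pseudo-INP $\gamma_i \circ \chi_i \circ \gamma_{i+1}$ (after possibly lengthening $\gamma_i$ and $\gamma_{i+1}$ — which doesn't change the dichotomy provided by Corollary \ref{iteration-bound}). By that corollary each illegal turn either vanishes under $f^T$ after reduction, or persists as the tip of an INP subpath of $[f^T(\gamma)]$. If every illegal turn persists as an INP tip, $[f^T(\gamma)]$ is a legal concatenation of legal and INP paths, which is precisely the exceptional conclusion; otherwise at least one illegal turn is lost, so $ILT([f^T(\gamma)]) < ILT(\gamma)$.

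For part (2), I would iterate part (1): since each application of $f^t$ either strictly drops $ILT$ or lands in the legal/INP-concatenation form (which is then preserved by all further iterates of $f$, as INP paths are $f$-periodic and their tips remain illegal forever), after at most $ILT(\gamma)$ applications the process stabilizes. This yields $\hat t(\gamma) := t \cdot ILT(\gamma)$, depending only on the number of illegal turns. For a loop $\gamma$, the same argument applied to a cyclic representative gives the cyclic version, since the decomposition into maximal legal subpaths is cyclically well-defined.

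For part (3), the main obstacle is to separate, from the start, the illegal turns of $\gamma$ into the \emph{persistent} ones (those that become tips of INP subpaths of $[f^{t(\gamma)}(\gamma)]$) and the \emph{transient} ones (those eventually killed by cancellation), since $t(\gamma)$ is not known a priori. I would follow the bookkeeping strategy of \cite{LL3} (Lemma 6.1) and \cite{LU-big} (Propositions 4.12 and 4.18): one application of $f^t$ from part (1) kills at least one transient illegal turn whenever any remain, so a careful pigeon-hole argument comparing the ``transient'' count $ILT(\gamma) - ILT([f^{t(\gamma)}(\gamma)])$ before and after iteration shows that, for $t_4$ a suitable multiple of $t$ (essentially $t$ times the maximal possible number of transient turns per unit decrease, which is bounded by the geometry of $\cal V(f^{\hat t})$), at least half of the transient turns are killed. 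The delicate point — which I expect to be the hardest step — is that transient illegal turns can interact with each other through shared legal branches, so one has to verify that the local guarantee from Corollary \ref{iteration-bound} combines additively across all illegal turns, and this is precisely where the ``translation'' from the classical case recommended by the author relies on the separation arguments developed in Section 4.2 of \cite{LU-big}.
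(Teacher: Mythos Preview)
Your proposal is correct and matches the paper's approach exactly: the paper does not give a detailed proof of this proposition but simply states that Corollary \ref{iteration-bound} is the only new ingredient needed, and refers the reader to the classical arguments in \cite{LL3}, Lemma 6.1 and \cite{LU-big}, Propositions 4.12 and 4.18 (in particular \S4.2 of \cite{LU-big}) for the remaining ``translation''. You have identified the same key ingredient and the same references, and your sketch of how parts (1)--(3) follow from the uniform bound $T = \mbox{card}(\cal V(f^{\hat t}))$ is precisely the intended derivation.
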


We end this section by 
summing up 
the results derived 
in this and in the previous section 
in the following proof. 
We recall (see Lemma \ref{C-bound}) that any $\cal G$ and $f$ as in Theorem \ref{main} satisfy conditions (1) and (2) of Hypothesis \ref{running-hyp}.

\begin{proof}[Proof of Theorem \ref{main}]
Statement (1) of Theorem \ref{main} has been proved in Proposition \ref{INP-finite}. Statement (2) is part of Proposition \ref{from-LUbig} (2). Finally, statement (3) is the special case of Proposition \ref{from-LUbig} (3) for legal $\gamma$.
\end{proof}

\section{Algorithms}
\label{sec:algorithm}

For many algorithmic question about automorphisms of free groups (or similar groups such as certain free products or mapping class groups) the efficient determinations of all INP paths in a 
given 
train track map is a crucial task. In the context of graph-of-space train track maps as considered here, all interesting data (like the exponents
in Proposition \ref{from-LUbig}) can be readily computed, once the set $\cal V(f^{\hat t})$ from Proposition \ref{INP-finite} has been determined. 
For this purpose the following needs to be satisfied:

\begin{hypothesis}
\label{given-data}
We assume that $f: \cal G \to \cal G$ is 
an expanding train track map. In particular, 
any vertex space $X_v$ is mapped to a vertex space 
$X_{f(v)}$, and any edge $e$ is mapped to an edge path $f(e)$. We assume furthermore:
\begin{enumerate}
\item
The map $f$ possesses a cancellation bound $C \geq 0$ as in Hypothesis \ref{running-hyp} (2).
\item
For any vertex space $X_v$ the induced map $\pi_1 X_v \to \pi_1 X_{f(v)}$ is explicitly given in algorithmic terms.
\item
For any connecting path $\chi$ in any vertex space $X_v$ the set of all preimage paths 
of $\chi$  
(up to homotopy relative endpoints)
in any 
vertex space $X_{v'}$ with $f(v') = v$ must be efficiently computable. 
\end{enumerate}
\end{hypothesis}

\begin{rem}
\label{algo-hyp}
(1) To be specific let us record the following: we only require in Hypothesis \ref{given-data} (1) that a cancellation bound $C \geq 0$ exists. This knowledge is needed to make sure that the algorithm presented below stops 
eventually, while the bound itself  is then 
calculated by the 
algorithm. In the case where 
a cancellation bound 
is known beforehand, 
the given algorithm can be streamlined slightly.

\smallskip
\noindent
(2) In the special case, where $\pi_1 \cal G$ is a free group 
$\FN$ of finite rank 
$N \geq 0$, 
and 
where 
for each vertex $v$ of $\cal G$ the induced isomorphism 
$G_v \to G_{f(v)}$ is given by finitely many data, then both conditions~(2) and (3) of Hypothesis \ref{given-data} are satisfied.

This is true in particular 
if each vertex group $G_v$ is topologically realized by a finite graph $X_v$, and if $f$ is a combinatorial map of the resulting total graph $\Gamma$ with 
$\pi_1 \Gamma = \FN$  (where $\Gamma$ consists of all local edges of any $X_v$, and of all graph-of-groups edges of $\cal G$).
\end{rem}

We will now list the steps which have to be carried out in order determine first the set $\cal V(f^{\hat f})$ and then the other data stated in the previous two sections:

\medskip
\noindent
STEP 1:
For each of the finitely many degenerate turns in $\cal G$ we use Hypothesis \ref{given-data} (3) to compute the 
complete list of its preimage turns, which by Lemma \ref{crucial-fin} (3) is finite.
Hence 
one can re-iterate the procedure and compute the finite list of turns that are $f^2$--preimage turns of any degenerate turn. We now pass to the $f^3$-preimage turns, and so on, and note that according to Lemma \ref{crucial-fin} (2) no turn can show up twice on our lists. Hence, according to Lemma \ref{crucial-fin} (4), after finitely many iterations we will find an exponent $t \geq 1$ such that the list of $f^t$-preimage turns of any degenerate turn is empty.  Hence, through setting $t_0 = t -1$, we have computed the exponent $t_0$ from Lemma \ref{crucial-fin} (5).

We also note that the total list of all turns computed this way, or in other words, the union of all the various intermediate lists, including all of the original degenerated turns we started out with, is identical to the list of all illegal turns in $\cal G$. This is a direct consequence of the definition of an illegal turn in Definition-Remark \ref{legal+} (3).

\medskip
\noindent
STEP 2:
We compute the finite list of 1-special turns as given in Definition \ref{special-words} (1), and use Hypothesis \ref{given-data} (3) to determine the list of their preimage turns, which by Lemma \ref{crucial-fin} (3) is also finite. We eliminate from this list any turn that shows up in the total list from STEP (1), since we are only interested in legal turns. Armed with this list of ``allowed turns'' and the knowledge (from Lemma \ref{special-applied}) that the legal branches $\gamma_1$ and $\gamma_2$ of any path $\eta = \bar \gamma_1 \circ \chi \circ \gamma_2$ from $\cal V(f)$ can only use these ``allowed turns'', we can produce by trial and error the list of all paths from $\cal V(f)$ as follows: 
We start out with all preimage turns of any degenerate turn, and keep increasing our lists by iterating the following procedure: we consider, for any path 
$\eta = \bar \gamma_1 \circ \chi \circ \gamma_2$ already in the list, the path $\eta'$ which derives from $\eta$ by adding on to the end of $\gamma_1$ and/or to the end of $\gamma_2$ a single edge, by means of a connecting turn that is ``allowed''. One now applies $f$ to $\eta'$ and checks 
whether $\eta'$ belongs to $\cal V(f)$ (in which we add it to our list), or not (in which case $\eta'$ is eliminated from further considerations).

Our list of paths keeps increasing, but at any time there are only finitely many possibilities for paths $\eta'$ as just described. Since we know from Lemma \ref{first-finiteness} that $\cal V(f)$ is finite, this iterative procedure will eventually stop. It follows that the thus computed list is equal to $\cal V(f)$, since 
we know from Remark \ref{V-sets} (2) that 
any path $\eta$ from $\cal V(f)$ can reduced by the inverse procedure, i.e. taking off iteratively terminal edges from either $\gamma_1$ and/or $\gamma_2$, to some preimage turn of a degenerate turn, such that at any intermediate step one has a path from $\cal V(f)$.

\medskip
\noindent
STEP 3:
From the knowledge of $\cal V(f)$ we compute now a cancellation constant $C \geq 0$ as maximal length of the paths from $\cal V(f)$. This is not quite the bound from Hypothesis \ref{running-hyp} (2), since we do not consider all reduced paths, but only reduced products of two legal paths; however, cancellations at images of such paths are the only ones that are relevant for our arguments.

From this constant $C$ we next compute the constant $C_1 \geq 0$ from Lemma \ref{bounded-length}, by using the formula 
given in Lemma 6.3 of \cite{CL}, translated into our setting through $C(f)^+ = \frac{C_1}{2}, K = 1, C(f) = \frac{C'}{2}$ and $\lambda_{min}^K = \lambda_{min}$, to obtain:
$$
C_1 = \frac{C'}{\lambda_{min} - 1}
$$
Here the constants $C'$ and $\lambda_{min}$ are calculated as follows: We first use the hypothesis that $f$ is expanding to calculate an exponent $t \geq 1$ such that $|f^t(e)| \geq 2$ for any edge $e$ of $\cal G$, and set $\lambda_{min}$ and $\lambda_{max}$ to be the minimum and the maximum respectively of the edge image lengths $|f^t(e)|$. We then compute a cancellation bound $C'$ for $f^t$ from $C$ through 
$$
C' \,\,= \,\,\sum_{k = 1}^t C \lambda_{max}^{k-1} \,\,=\,\, C \, \frac{\lambda_{max}^t-1}{\lambda_{max}-1}
$$
and recall the observation before Remark \ref{sub-edge-path}, that for any exponents $t' \geq t$ the path $\eta_t$ is a subpath of $\eta_{t'}$. This shows that an upper bound for length of the path $\eta_\infty$ for any pseudo-INP path $\eta$, as defined at the beginning of section \ref{sec:INP-candidates}, computed for any positive power of $f$, is also valid for $f$.

In order to complete STEP 3 of our algorithm we now have to calculate the exponent $t_1$ from Lemma \ref{vertices-into} by iterating $f$ on the edges $e$ of $\cal G$ until one has $|f^{t_1}(e)| \geq C_1$ for any edge $e$.

\medskip
\noindent
STEP 4:  We now consider the proof of Lemma \ref{in-detail} and notice that for every path $\eta$ in $\cal V(f^{t_1})$ any point $x$ on any vertex transition of $\eta$ which is distinct from the two boundary points of $\eta$ is folded by $f^{t_1}$ onto some point $x'$ on the other legal branch of $\eta$ than $x$. It follows that the exponent $t_1$ is an upper bound to the value of the exponent $t(x)$ from Lemma \ref{vanishing} (1). Following through till the end of the proof of Lemma \ref{in-detail} yields thus that the exponent $\hat t(x)$ is bounded above (independently of the above choices for $\eta$ and $x$) by the sum $t_1 + t_0$, and both of these terms have been computed in the previous steps. 

We can thus pass to the proof of Proposition \ref{INP-finite}, where the value $\hat t$ is defined 
through finitely many choices for exponents $t_2$ and $t_3$. Here any $t_2$ is equal to $t_1$ or to $\hat t(x_1)$ for some point $x_1$ as the point $x$ above, so that $t_2$ is bounded above by $t_1 + t_0$. The value of any $t_3$ in turn is equal to some $t_2$ or to $t_2 + t(x_2)$, where $x_2$ is another point as $x$ above, only with $t_2$ in place of $t_1$. We thus obtain $t_1 + 2 t_0$ as upper bound for any of the finitely many values of $t_3$, and thus also for $\hat t$, which is defined as the maximum of all $t_3$.

Since from Remark \ref{V-sets} (4) we know that $\cal V(f^t) \subset \cal V(f^{t'})$ for any $t' \geq t \geq 1$, it suffices now to compute $\cal V(f^{t_1 + 2t_0})$. This can be done just as the computation of $\cal V(f)$ in STEP 2. Alternatively on can compute an upper bound for the length of any $\eta \in \cal V(f^{t_1 + 2t_0})$ just as in STEP 3, calculate the finite set of all pre-$(t_1+2t_0)$-special turns as in STEP 2, and do a more direct trial-an-error search among all edge paths defined by this bound, 
and by the set of ``allowed'' turns as in STEP 2.

\medskip
\noindent
STEP 5:  One now passes to the set $\cal V_+$ and the map $\hat f_+ :
\cal V_+ \to \cal V_+$ as defined in section \ref{sec:INP-candidates}, for $\cal V(f^{t_1 + 2t_0})$ in place of $\cal V(f^{\hat t})$. For any $f_+$-periodic edge path $\eta$ in $\cal V_+$ we now subdivide the first and the last edge by introducing new inessential vertices, so that the INP-sub-path $\eta_\infty$ of $\eta$ becomes an honest edge path.

This gives the possibility to calculate the maximal number $t_*$ of iterations of $f$ need to apply to any path $\alpha$ which is the union of two paths from $\cal V_+$ with 
non-zero overlap, in order to achieve (following Lemma \ref{vanishing} (1)) that $[f^{t_*}(\alpha)]$ is legal.
Furthermore, 
one needs to compute the maximal seize $t_+$ of any $\hat f_+$-orbit.
It follows then from the considerations in subsection 4.2 of \cite{LU-big} that $t_* + t_+$ is an upper bound to the exponent $t_4$ in Proposition \ref{from-LUbig} (3).

\medskip

As a consequence we now have:

\begin{proof}[Proof of Theorem \ref{algo-det}]
We first recall (see 
Lemma \ref{C-bound}) 
that the assumption, that $f$ is an expanding train track map which induces an automorphism of the free group $\pi_1 \cal G$, implies that all parts of Hypeothesis \ref{running-hyp} are satisfied. Hence the results of sections \ref{sec:INP-paths} and \ref{sec:INP-candidates} apply, and in particular 
(see Remark \ref{algo-hyp} (2)) 
the above 5 steps of our algorithm can be performed. From the finite set $\cal V_+$ we obtain the set of all INP paths in $\cal G$ as subset of all $\hat f_+$-periodic paths, which gives assertion (1) of Theorem \ref{algo-det}. Part (3) is achieved in STEP 5 above through setting $\hat t = t_4$. Finally, for any edge path or loop $\gamma$ in $\cal G$ we 
count 
the number of illegal turns and apply the formula (\ref{expo-dec}) to compute the exponent $t(\gamma)$ from part (2).
\end{proof}

The  computation of the set of INP-paths (and also of the above exponents $\hat t$ and $t(\gamma)$) for expanding train track maps of graphs-of-spaces are useful for many algorithmic purposes. In particular they play a crucial role in the construction of a $\beta$-train track representative for any automorphism $\phi \in \Out(\FN)$
(see \cite{Lu_conj-pr1_MPI, GautLu, Lu-PF, Lu-pre}
and also Remark \ref{beta-special} below), which was the original motivation for this paper. Here we restrict ourselves to the observation that the following corollary can be used to determine generators for the subgroup of elements of $\pi_1 \cal G$ that are element-wise fixed, once the analogous algorithmic task is solved for each of the induced vertex group automorphisms.

\begin{cor}
\label{fixed-conj}
Let $f: \cal G \to \cal G$ be as in Proposition \ref{from-LUbig}. 
Let $[g]$ denote a conjugacy class in $\pi_1 \cal G$
which is not represented by any loop in one of the vertex spaces.

Then $[g]$ is fixed by 
$f_*: \pi_1 \cal G \to \pi_1 \cal G$ if and only if $[g]$ is represented by a loop 
in $\cal G$ which is a 
cyclically legal 
concatenation (see Definition-Remark \ref{legal+} (6))
\begin{equation}
\label{with-s}
\gamma = \eta_1 \circ \chi_1 \circ \ldots \eta_q \circ \chi_q \qquad \text{(with $q \geq 1$)}
\end{equation}
of INP paths $\eta_i$ and zero paths $\chi_i$, such that for some integer $s \geq 0$ one has $f(\eta_i) = [\eta_{i+s}]$ and $f(\chi_i) = \chi_{i+s}$ (modulo homotopy relative endpoints), where all indices are understood to count only modulo $q$.
\end{cor}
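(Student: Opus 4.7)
I split the proof into the two implications. The \emph{if} direction is a direct calculation: applying $f$ to $\gamma = \eta_1 \circ \chi_1 \circ \ldots \circ \eta_q \circ \chi_q$ and reducing, every concatenation point carries a legal turn by cyclic legality, so no cancellation occurs there; the only reductions are internal to each $f(\eta_i)$, producing $\eta_{i+s}$, while $f(\chi_i) = \chi_{i+s}$ by assumption. Hence $[f(\gamma)] = \eta_{1+s} \circ \chi_{1+s} \circ \ldots \circ \eta_{q+s} \circ \chi_{q+s}$ is $\gamma$ shifted cyclically by $s$ positions, so as loops $[f(\gamma)] = \gamma$ and $f_*[g] = [g]$.

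For the \emph{only if} direction, I would start from any edge-loop $\gamma$ representing $[g]$ and apply Proposition \ref{from-LUbig}(2) to replace it by $\gamma_0 := [f^{t(\gamma)}(\gamma)]$, a cyclically legal concatenation of legal subpaths $\alpha_j$ alternating with INP subpaths $\sigma_k$. The hypothesis that $[g]$ is not represented by a loop inside a vertex space guarantees $\gamma_0$ contains at least one graph-of-spaces edge. The main obstacle is the length-rigidity step: to rule out any $\alpha_j$ of positive length, i.e.\ to show that every $\alpha_j$ is in fact a zero path. Once this is done, the decomposition $\gamma_0 = \eta_1 \circ \chi_1 \circ \ldots \circ \eta_q \circ \chi_q$ has the shape required by the corollary, and it only remains to identify the cyclic shift $s$.

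To execute the length-rigidity step, note that $\gamma_0$ represents $[g]$ (since $[g]$ is fixed), so the cyclically reduced length of $[f^T(\gamma_0)]$ equals $|\gamma_0|$ for every $T \geq 0$. Cyclic legality forbids cancellation at concatenation points under $f^T$, whence
\[
|\gamma_0| \;=\; \sum_{j} |f^T(\alpha_j)| \;+\; \sum_{k} |[f^T(\sigma_k)]|.
\]
By Lemma \ref{bounded-length} the second sum is bounded above by $q \cdot C_1$ independently of $T$. On the other hand, since $f$ is expanding and each $\alpha_j$ is legal, $|f^T(\alpha_j)|$ grows without bound with $T$ whenever $\alpha_j$ contains a graph-of-spaces edge. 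This forces every $\alpha_j$ to be a zero path $\chi_j$.

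Finally, a direct calculation using the INP relation $[f^{t_i}(\eta_i)] = \eta_i$ shows that $[f(\eta_i)]$ is itself an INP, hence belongs to the finite set of INPs from Proposition \ref{INP-finite}. Applying $f$ to $\gamma_0$ and reducing -- with cyclic legality again excluding cancellation at concatenation points -- yields $[f(\gamma_0)] = [f(\eta_1)] \circ f(\chi_1) \circ \ldots \circ [f(\eta_q)] \circ f(\chi_q)$, another cyclically legal concatenation of INPs and zero paths. The decomposition of such a loop into INP pieces and zero-path interpieces is canonical: the illegal turns of the loop are precisely the tips of the constituent INPs, and by Lemma \ref{vanishing} the INP surrounding each tip is uniquely determined by its tip. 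Since $[f(\gamma_0)]$ and $\gamma_0$ represent the same loop, their canonical cyclic sequences must therefore agree up to a common cyclic shift $s$, giving $[f(\eta_i)] = \eta_{i+s}$ and $f(\chi_i) = \chi_{i+s}$ as claimed.
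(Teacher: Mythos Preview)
Your proof is correct and follows essentially the same route as the paper's: both invoke Proposition~\ref{from-LUbig}(2) to obtain a cyclically legal concatenation of legal and INP pieces, then use that $f$ is expanding to rule out any non-zero legal piece, and finally argue that $f$ acts as a cyclic shift on the decomposition. The paper's proof is much terser --- it disposes of the ``if'' direction as ``trivially true'' and handles the cyclic-shift step by a one-line appeal to the normal form for reduced words in $\Pi(\mathcal G_X)$ --- whereas you spell out the length-rigidity inequality explicitly (invoking Lemma~\ref{bounded-length}) and justify the shift via uniqueness of the INP at each illegal tip (Lemma~\ref{vanishing}); these are two phrasings of the same underlying uniqueness, so the arguments are equivalent in substance.
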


\begin{proof}
This is a direct consequence of Proposition \ref{from-LUbig} (2), since 
in that statement the 
cyclically 
legal 
concatenation $[f^t(\gamma)]$ must be built only from INP paths $\eta_i$ 
and from connecting zero paths. 
Indeed, 
except for subpaths of 
those $\eta_i$, 
the path $[f^t(\gamma)]$ can not contain any 
legal 
edge 
path: otherwise, since $f$ is expanding, the loop $\gamma$ in question could not be fixed by~$f$.
Since 
by hypothesis 
$[g]$ is not contained in any of the vertex spaces, there must be at least one such INP path in the legal concatenation $[f^t(\gamma)]$.
From the normal form for reduced words in a graph-of-groups it follows that $f$ acts as cyclic permutation on the cyclically reduced word in 
the Bass group $\Pi(\cal G_X)$ 
which represents 
the fixed conjugacy class 
$[g]$, and since $f$ maps any INP path (up to reduction at the tip) to  an INP path, we deduce the ``only if'' direction of the statement. The ``if'' direction is trivially true.
\end{proof}

\begin{rem}
\label{3-comments}
Some comments about Corollary \ref{fixed-conj} seem appropriate:

\smallskip
\noindent
(1)
Since there are only finitely many INP paths in $\cal G$, and 
these 
are (up to reduction at the tip) permuted by the map $f$, there is a positive power $f^t$ which fixes each of them. Through 
possibly replacing $t$ by a larger exponent 
(which we still call $t$) 
we can also 
assume, for a given cyclically legal concatenation $\gamma$ as in (\ref{with-s}), that 
each of the connecting zero paths $\chi_i$ is fixed (up to homotopy rel. 
endpoints). 
Hence for the initial and terminal vertex space $X_v$ of $\gamma$ 
we see that in the associated graph-of-groups $\cal G_X$ 
the element in 
$\FN = \pi_1(\cal G_X, v)$, which is 
represented by 
$\gamma$, 
is fixed by the automorphism 
$\Phi = f_{*, v} \in \Aut(\FN)$ 
defined by $f^t$. Furthermore, for this iterate $f^t$ instead of $f$ we can set $s$ = 0 in the statement of Corollary \ref{fixed-conj}.

\smallskip
\noindent
(2)
For general $f$, however, the ``variable'' $s$ in the statement of Corollary \ref{fixed-conj} is needed. Without it, the 
``only if'' implication in that 
statement is in general wrong.

\smallskip
\noindent
(3)
The sentence right before Corollary \ref{fixed-conj} is a slight overstatement: In fact, the algorithmic input from the factors needed there is slightly stronger, in that not just closed paths but also paths with possibly distinct given 
endpoints are concerned.
\end{rem}

\begin{rem}
\label{periodic-fixed}
From the finiteness arguments stated in Remark \ref{3-comments} (1) we see directly that any conjugacy class $[g']$ in $\pi_1 \cal G$, which is not fixed by $f_*$ but rather mapped periodically, is also represented by a concatenation $\gamma$ as in (\ref{with-s}), except that not $f$ but only some positive power of $f$ acts as cyclic permutation on this concatenation.
\end{rem}

\section{Absolute train track maps with and without periodic edges}
\label{sec:tts-with-periodics}

In the special case where $f$ is an 
expanding 
absolute train track map (i.e. all vertex groups 
$G_v = \pi_1 X_v$ are trivial), the statement of Theorem \ref{main} is well known, see for instance \cite{LU-big}, \S 4.2.
Nevertheless, even a small modification of the hypotheses will perturb this result: If one considers an absolute train track map $f: \Gamma \to \Gamma$,  for a finite graph $\Gamma$, and admits edges that are not expanded by $f$ but mapped periodically, then in general there will be infinitely many distinct INPs in $\Gamma$. Indeed, for any such periodic edge $e$ which terminates at the initial vertex of an INP path $\eta$, the concatenation $e \circ \eta$ will also be an INP path.

\smallskip

In this case, however, we can build a special graph-of-spaces $\cal G$ from $\Gamma$ by partitioning the edges of $\Gamma$ as follows: 
An edge $e$ is {\em polynomially growing} if there exist integers $c, d \geq 0$ such that one has $|f^t(e)| \leq c t^d$ for all $t \geq 1$. Otherwise $e$ is called {\em exponentially growing}. We then define the relative part $X$ of $\cal G$ to be the union of all polynomially growing edges 
and of all vertices of $\Gamma$, 
so that each vertex space $X_v$ is a connected component of 
this union, 
while every 
graph-of-spaces edge of $\cal G$ 
(as defined in 
section \ref{prelims}) 
is an exponentially growing edge of $\Gamma$. Now $f$ defines a train track map 
$f_\cal G: \cal G \to \cal G$ 
on the graph-of-spaces $\cal G$ which is indeed expanding, so that we can now apply Theorem \ref{main} to deduce the desired finiteness results modulo the polynomially growing part $X$ of $\Gamma$.

\begin{rem}
\label{beta-special}
In the 
complete 
absence of INP paths in $\cal G$, 
the 
train track map 
$f_\cal G: \cal G \to \cal G$ obtained this way 
turns out to be a special case of 
a $\beta$-train-track map as defined in 
 \S 4 of \cite{Lu-PF}. 
Such $f_\cal G$ are 
easier to deal with than the general case
of $\beta$-train-track maps, 
which carries a certain amount of technical baggage that might appear heavy at first sight. 
The ``absolute'' $\beta$-train-track maps $f_\cal G$ 
defined 
above deserve special interest, 
in that all the issues that come into play when trying to study the structure of an automorphisms 
$\phi \in \Out(\FN)$ 
from a given 
train track map (see \S7 of \cite{Lu-PF}) can already be seen here in a nutshell.
\end{rem}

\begin{rem}
\label{fix-graph}
(1)
In order to simplify the discussion, we will from now on assume that every polynomially growing edge of $\Gamma$ is actually fixed by some positive power of 
$f$. Hence 
the polynomially growing subspace $X$ of $\Gamma$
consists 
only of 
periodic 
edges and of all vertices of 
$\Gamma$, and we 
also assume that $\Gamma$ has been subdivided 
such 
that every INP path $\eta$ terminates in vertices. We then build a graph $X^*$ from $X$ by 
adding 
for each of the finitely many INP paths $\eta$ of $\cal G$ an edge $e_\eta$ to 
$X$, 
which has the same endpoints as $\eta$. On the resulting graph $X^*$ we define a map $f^*$ by declaring $f^*(P) = f(P)$ for any vertex $P$ of $X^*$, \, $f^*(e_\eta) := e_{[f(\eta)]}$ for any INP path $\eta$, and $f^*(e) = f(e)$ for all other edges of $X^*$.

\smallskip
\noindent
(2) 
We define a graph map $h: X^* \to \Gamma$ which acts as identity map on every vertex or edge that belongs to both, $X^*$ and $\Gamma$, and satisfies 
$h(e_\eta) = \eta$ for any INP path $\eta$. 
We observe that one has $h f^*(\gamma) = [f h(\gamma)]$ for every edge path $\gamma$ in $X^*$.

\smallskip
\noindent
(3) 
It follows now directly from Corollary \ref{fixed-conj} that a conjugacy class in $\pi_1 \Gamma$ is periodic if and only if it is represented by a loop in image of the map $h$. In particular, one deduces the following 
proposition.
\end{rem}

\begin{prop}
\label{direct-cor}
Let $f: \Gamma \to \Gamma$, \, $f^*: X^* \to X^*$ and $h: X^* \to  \Gamma$ be as in Remark \ref{fix-graph}. 
Assume furthermore that every periodic edge of $\Gamma$ is actually fixed by $f$, 
and that for every INP path $\eta$ one has $[f(\eta)] = \eta$. 

A representative $\Phi \in \Aut(\FN)$ of the outer automorphisms $\phi := f_*$ induced by $f$ on $\FN := \pi_1 \Gamma$ has non-trivial fixed subgroup $Fix(\Phi)$ if and only if one of the following takes place:
\begin{enumerate}
\item
There is an $f^*$-fixed vertex $P \in X^{*}$ which lies in some non-contractible component $X^{*}_P$ of $X^{*}$, such that, if $\pi_1 \Gamma$ is read off from the base point $h(P)$, then 
the 
lift $\Phi$ of $\phi$ is 
defined 
through: 
$$\Phi = f_{*,h(P)}: \pi_1(\Gamma, h(P)) \to \pi_1(\Gamma, h(P))$$
In this case one has
$$
Fix(\Phi) = h_{*, P}(\pi_1 (X^{*}_P, P))\, ,$$
and a finite system of generators for $Fix(\Phi)$ can be determined algorithmically from the graph $\Gamma$ and the map $f$.
\item
The representative $\Phi$ of $\phi$ is obtained from another lift $\Phi'$ which is as in (1) above, through composition of $\Phi'$ with a conjugation by some non-trivial element $w \in Fix(\Phi')$. In this case $Fix(\Phi)$ is the cyclic group generated by the element $w$.
\end{enumerate}
\end{prop}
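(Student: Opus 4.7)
The argument translates the condition $g \in Fix(\Phi)$ into a combinatorial statement about loops in $X^{*}$, using the intertwining $h \circ f^{*} = [f \circ h]$ together with the characterization of $\phi$-fixed conjugacy classes from Corollary \ref{fixed-conj}. As a preliminary, under the strengthened hypotheses the map $f^{*}$ fixes every edge of $X^{*}$ set-wise: the polynomially growing edges of $\Gamma$ by assumption, and $f^{*}(e_\eta) = e_{[f(\eta)]} = e_\eta$ because $[f(\eta)] = \eta$. After a single subdivision we may assume these fixations are point-wise, so whenever $P$ is any $f^{*}$-fixed vertex, the component $X^{*}_P$ is $f^{*}$-invariant and $f^{*}$ induces the identity on $\pi_1(X^{*}_P, P)$.

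\textbf{Case (1).} For $\Phi = f_{*, h(P)}$, the identity $h \circ f^{*} = [f \circ h]$ from Remark \ref{fix-graph}(2), combined with $f(h(P)) = h(P)$ and the triviality of the induced map $f^{*}_{*, P}$, gives $\Phi \circ h_{*, P} = h_{*, P} \circ f^{*}_{*, P} = h_{*, P}$; hence $h_{*, P}(\pi_1(X^{*}_P, P)) \subseteq Fix(\Phi)$. For the reverse inclusion, let $g \in Fix(\Phi) \setminus \{1\}$, represented by a reduced loop $\gamma$ based at $h(P)$. Proposition \ref{from-LUbig}(2) combined with Corollary \ref{fixed-conj} gives a cyclically legal concatenation $\eta_1 \circ \chi_1 \circ \cdots \circ \eta_q \circ \chi_q$ of INP and zero paths representing $[g]$; because $g$ (and not just $[g]$) is strictly $\Phi$-fixed and each $\eta_i, \chi_i$ is itself strictly $f$-fixed, this concatenation can be chosen based at $h(P)$. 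It then lifts under $h$ to a loop $\gamma^{*}$ in $X^{*}_P$ based at $P$ with $h_{*, P}(\gamma^{*}) = g$.

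\textbf{Case (2).} Every lift of $\phi$ at basepoint $v_0 = h(P)$ has the form $\Phi = i_w \circ \Phi'$ with $\Phi' = f_{*, h(P)}$ and $w \in \FN$. Since $\Phi(w) = w \Phi'(w) w^{-1}$, one has $w \in Fix(\Phi)$ iff $w \in Fix(\Phi')$; so $w \in Fix(\Phi') \setminus \{1\}$ gives $\langle w \rangle \subseteq Fix(\Phi)$. For the classification of lifts with non-trivial fixed subgroup: a non-trivial $g \in Fix(\Phi)$ represents a $\phi$-fixed conjugacy class whose loop representative, being a cyclic concatenation of strictly $f$-fixed pieces, passes through an $f$-fixed vertex of $\Gamma$. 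Re-basing $\Phi$ at such a vertex $h(P)$ presents it as either $f_{*, h(P)}$ (case (1), with $X^{*}_P$ automatically non-contractible because it carries $g \neq 1$) or $i_w \circ f_{*, h(P)}$ with $w \neq 1$ (case (2)).

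\textbf{Main obstacle.} The delicate part is the reverse inclusion $Fix(\Phi) \subseteq \langle w \rangle$ in case (2). Here $g \in Fix(\Phi)$ yields $\Phi'(g) = w^{-1} g w$; this must be upgraded to $[g, w] = 1$, after which cyclicity of centralizers in $\FN$ forces $g \in \langle w \rangle$. Commutativity follows by comparing the loop representative of $[g]$, which lives in $h(X^{*}_P)$ up to conjugation, against the axis of $w$ in the universal cover of $\Gamma$: the twist $\Phi = i_w \Phi'$ together with strict $\Phi$-fixation of $g$ forces $g$ and $w$ to share an axis. The algorithmic conclusion in case (1) is then immediate from Theorem \ref{algo-det}, which renders $X^{*}$ (and each $X^{*}_P$) explicitly computable.
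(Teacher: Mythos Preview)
Your approach differs from the paper's: the paper works in the universal cover $\tilde\Gamma$, using the bijection between lifts $\tilde f$ of $f$ and representatives $\Phi$ of $\phi$ (via $\Phi(w)\tilde f = \tilde f w$), and the equivalence ``$w \in Fix(\Phi)$ $\Leftrightarrow$ $\tilde f$ preserves $ax(w)$ orientation-preservingly''. The dichotomy (1)/(2) is then read off from whether $\tilde f$ fixes some lift $\tilde P$ on the axis. You instead stay mostly downstairs, using the intertwining $h\circ f^* = [f\circ h]$ and algebra.

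Your argument has two gaps, the second one genuine.

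\smallskip
\emph{Case (1), reverse inclusion.} Corollary \ref{fixed-conj} yields only an \emph{unbased} cyclic loop for $[g]$; your sentence ``this concatenation can be chosen based at $h(P)$'' does not follow, because the cyclic loop might a priori sit in a different component of $h(X^*)$, and the conjugating tail from $h(P)$ to it need not lie in $h(X^*_P)$. The fix is to bypass Corollary \ref{fixed-conj} and apply Proposition \ref{from-LUbig}(2) directly to the reduced \emph{based} edge path $\gamma$ at $h(P)$ representing $g$: from $[f(\gamma)]=\gamma$ one gets that $\gamma$ itself is a legal concatenation of legal and INP subpaths, and since $f$ is expanding the legal pieces must consist of fixed edges. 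Then $\gamma$ lifts to $X^*_P$ as needed.

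\smallskip
\emph{Case (2), the step $[g,w]=1$.} This is where the real problem lies. From $\Phi'(g)=w^{-1}gw$ alone one \emph{cannot} conclude that $g$ and $w$ commute: for instance, if $\Phi'$ happened to be the inner automorphism $i_{w^{-1}}$, every $g$ would satisfy this equation without any commutation constraint. Your sentence ``the twist $\Phi=i_w\Phi'$ together with strict $\Phi$-fixation of $g$ forces $g$ and $w$ to share an axis'' is an assertion, not an argument, and it is precisely the content that needs proof. What must be used, and what your write-up never invokes, is the hypothesis that we are \emph{not} in case (1): the lift $\tilde f$ corresponding to $\Phi$ fixes no lift of an $f$-fixed vertex. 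The paper's framework makes this step natural: for every $g\in Fix(\Phi)$ the axis $ax(g)$ is $\tilde f$-invariant and (by Corollary \ref{fixed-conj}) is a bi-infinite concatenation of lifts of fixed edges and INPs; since $f^*$ is the identity on $X^*$, the restriction of $\tilde f$ to such an invariant line is a simplicial isometry with no fixed vertex, hence a non-trivial translation. A tree isometry without fixed points has a \emph{unique} axis, so all $ax(g)$ coincide and $Fix(\Phi)$ is cyclic. Without passing to the universal cover (or an equivalent device), your downstairs argument has no mechanism to rule out a rank-$2$ fixed subgroup.
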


\begin{proof}
We first recall that the action of the fundamental group $\FN = \pi_1 \Gamma$ 
on the universal covering $\tilde \Gamma$ determines for any map $f: \Gamma \to \Gamma $ a bijection between the lifts $\tilde f: \tilde \Gamma \to \tilde \Gamma $ on one hand, and the representatives $\Phi \in \Aut(\FN)$ of the outer automorphism $\phi = f_*$ 
on the other. 
This bijection is given by the equality
\begin{equation}
\label{lifts}
\Phi(w) \tilde f = \tilde f w: \tilde \Gamma \to \tilde \Gamma \qquad \text{for any $w \in \FN$}\, .
\end{equation}
Since $\tilde \Gamma$ is a simplicial tree, every non-trivial $w \in \FN$ fixes set-wise an axis $ax(w)$ in $\tilde \Gamma$, and 
$w$ 
shifts along this axis as a translation. From (\ref{lifts}) 
and the uniqueness of this axis 
we deduce that $w$ belongs to $Fix(\Phi)$ if 
and only if 
$\tilde f$ 
fixes set-wise
(modulo reduction) the axis of $w$,
while 
preserving its orientation. 
This axis 
$ax(w)$ is mapped under the covering map $\tilde \Gamma \to \Gamma$ to the loop $\gamma$ that represents the conjugacy class $[w]$.
We 
see from Corollary \ref{fixed-conj} that either 
$\gamma$ 
consists only of edges from $X$, or else it must run over at least one INP path. In either case, there is a vertex $P$ 
which is fixed by $f$, over which the loop $\gamma$ must cross. 
If any lift $\tilde P$ of $P$ on $ax(w)$ is fixed by $\tilde f$, then case (1) of the claim occurs; otherwise we are in case (2). This is a direct consequence of the 
classical 
configuration described here, and of our definition of $X^*$ and of $f^*$ in Remark \ref{fix-graph}.
\end{proof}

\begin{rem}
\label{periodic-non-fixed}
Let us consider the statement of 
Proposition \ref{direct-cor}, but without the extra assumption that every periodic edge 
and every INP path is fixed by $f$ (up to reduction at the tip of the INP paths).
\begin{enumerate}
\item
In this case 
one can replace $f$ by a positive power $f^t$ which {\em does} satisfy this 
extra 
assumption. The existence of such an exponent $t \geq 1$ follows 
directly from 
the finiteness of $\Gamma$, and 
from 
the finiteness 
(shown in Proposition \ref{INP-finite}) 
of the set of INP paths in the graph-of-spaces $\cal G$ associated as above to $\Gamma$.
Hence the conclusion in Proposition \ref{direct-cor} holds, for $f^t$ instead of~$f$.
\item
On the other hand, the implication  
of Proposition \ref{direct-cor}, now considered again for $f$, but with $X^*$ replaced by the subgraph $X^{**}$ of all edges of $X^*$ that are fixed by $f^*$, 
turns out to be 
wrong: in this case there are in general fixed conjugacy classes as in Corollary \ref{fixed-conj} which are represented by a path $\gamma$ as in (\ref{with-s}) with 
$1 \leq s \leq q-1$. 
Such $\gamma$ defines a non-trivial element of $\FN$ which is fixed by a representative $\Phi$ of $\phi$ 
that 
does not belong to either of the classes (1) of (2) from Proposition \ref{direct-cor}, with $X^{**}$ replacing $X^*$.
\end{enumerate}
\end{rem}

Going back to Remark \ref{beta-special}, 
the determination of the 
``structure'' 
of a given outer automorphism $\phi$ 
is in full detail a very challenging task. To give the reader a taste of the subtleties that come into play we now assume that $\phi$ is represented by an absolute train track map $\phi$. In addition we also assume that the transition matrix $M(f)$ (see 
\cite{CL} \S2) is primitive: in other words, there exists an exponent $t \geq 1$ such that for any edge $e_i$ of $\Gamma$ the edge path $f^t(e_i)$ crosses over every edge $e_j$ of~$\Gamma$. 

Note 
that 
the primitivity assumption for $M(f)$ 
implies 
for any vertex space $X_v$ 
in the above defined associated expanding train track map 
$f: \cal G \to \cal G$, 
since every edge of $X_v$ is by definition polynomially growing, that $X_v$ is  
trivial (meaning it consists of a single vertex only).
The converse implication doesn't hold, 
since 
the absence of polynomially growing edges only implies that $f: \Gamma \to \Gamma$ is expanding, but 
in general there is more than one stratum in 
an expanding absolute train track map.

\begin{rem}
\label{near-iwip}
Let $\cal C_{prim} = \cal C_{prim}(\FN)$ denote 
the class of 
all automorphisms $\phi \in \Out(\FN)$
which can be 
represented by an expanding absolute train track map $f: \Gamma \to \Gamma$ with primitive transition matrix.

\smallskip
\noindent
(1)
It is well known that $\cal C_{prim}$ contains all iwip autormorphims (sometimes also called ``fully irreducible''), including the toroidal ones, i.e. those which are induced by pseudo-Anosov homeomorphisms of a surface with one boundary component.

\smallskip
\noindent
(2)
It is also fairly well known that there are 
in addition automorphisms $\phi \in \cal C_{prim}$ which are not iwip, for instance if $\phi$ derives from a pseudo-Anosov homeomorphism of a surface with more than one boundary component. 

\smallskip
\noindent
(3)
One typically attributes the existence of non-iwip $\phi \in \cal C_{prim}$ to the failure of a local iwip-criterion, namely the connectedness of the Whitehead graph on each vertex of $\Gamma$, which is canonically defined by $f$ (see 
\cite{CL}). 
However, the examples in (2) above have in general connected vertex Whitehead graphs.

\smallskip
\noindent
(4)
From (2) and (3) above we deduce that, other than the connectedness of the vertex Whitehead graphs, the second main obstruction of any $\phi \in \cal C_{prim}$ to be iwip is the presence of 
more than one INP loop 
as in 
(\ref{with-s}). 
Indeed, armed with the information from Proposition \ref{direct-cor}, it 
is not hard 
to construct expanding absolute train track map $f: \Gamma\to \Gamma$ 
which give rise to automorphisms $\phi = f_* \in \cal C_{prim}$ that fix (up to conjugation) subgroups of $\FN = \pi_1 \Gamma$ which are of rank $\geq 2$.
However, the author admits freely that, adding the requirement that all vertex Whitehead graphs are connected, left him seriously in doubt whether such $f$ can indeed exist. But the following example settles this question.
\end{rem}

\begin{example}
\label{exotic-near-iwip}
We consider the following automorphism of  $F_6 = F(a,b,c,d,e,f,)$ and its realization $f: \cal R_6 \to \cal R_6$ on the rose $\cal R_6 = \cal R(a, b, c, d, e, f)$:
\begin{equation}
\label{exotic}
	a  \,\,\, \mapsto \,\,\,  b\, a\, c\, c\, d\, d\, e\, e\, f\, f \,, \quad
	b  \,\,\, \mapsto \,\,\,  a\,, \quad
	c  \,\,\, \mapsto \,\,\,  a \,c\,, \quad
	d  \,\,\, \mapsto \,\,\,  d\, a\,, \quad
	e  \,\,\, \mapsto \,\,\,  a \,e\,, \quad
	f  \,\,\, \mapsto \,\,\,  f\, a
\end{equation}
One sees directly that the transition matrix $M(f)$ is primitive, and it doesn't take long to verify that the Whitehead graph at the sole vertex of $\cal R_6$ is connected. Since the automorphism is positive, the map $f$ is absolute train track, and since every generator has length $\geq 2$, it is expanding. 
However, 
all elements of the subgroup generated by  $c^{-1}\, e$  and  $d \,f^{-1}$  are fixed.

The ``complicated'' image of the generator $a$ 
in (\ref{exotic}) defines 
only a special case of a whole family of automorphisms which all have the 
above 
properties: One can alternatively set
$$	a  \,\,\, \mapsto \,\,\,  b \,W	$$
for any word  $W$  which avoids the generator $b$,  and is complicated enough to guarantee the connectedness of the Whitehead graph. Avoiding  $b$  makes  sure that we have an automorphism at hand, and not just an endomorphism.  
We thus observe that there are indeed plenty of such ``exotic'' examples.
\end{example}

From Remark \ref{near-iwip} we see that the class $\cal C_{prim}$ 
is still too large to expect that each of its elements 
strongly resembles an iwip automorphism. 
On the other hand, certain fundamental properties of iwip automorphisms (like the existence of a unique expanding invariant $\R$-tree) are shared by some of the automorphisms $\phi \notin \cal C_{prim}$ which are captured by Proposition \ref{direct-cor}.
In joint work 
\cite{KL} 
with I. Kapovich 
(also \cite{Lu-tree-like}) 
the author has made an attempt to 
systematically 
study automorphisms that are ``near-iwips'', 
with respect to the following particular 
aspects:
\begin{enumerate}
\item
Uniqueness and properties of the projectively fixed expanding limit $\R$-tree.
\item
Fixed free factors or fixed free conjugacy classes.
\item
The existence of very special train track representatives.
\item
The uniqueness of the attracting lamination, and its particular properties.
\end{enumerate}
In fact, a first version of Proposition \ref{direct-cor} already transpired in  discussions Kapovich and I had at various times around \cite{KL}. It turns out, however, that it is difficult to conciliate 
the above issues (1) - (4) 
into 
a single 
convincing concept.

\end{document}